\numberwithin{equation}{section}
\begin{document}
\title{A bending-torsion theory for thin and ultrathin rods as a $\Gamma$-limit of atomistic models}
\author{Bernd Schmidt \\ \textsc{\footnotesize{Institut f\"{u}r Mathematik, Universit\"{a}t Augsburg, D-86135 Augsburg, Germany}} \\ \footnotesize{Email address: \href{mailto:bernd.schmidt@math.uni-augsburg.de}{\texttt{bernd.schmidt@math.uni-augsburg.de}}} \\ 
   \and Ji\v{r}\'{\i} Zeman \\ \textsc{\footnotesize{Institut f\"{u}r Mathematik, Universit\"{a}t Augsburg, D-86135 Augsburg, Germany}} \\ \footnotesize{Email address: \href{mailto:geozem@seznam.cz}{\texttt{geozem@seznam.cz}}}}
\date{\today}
\maketitle
\begin{abstract}
The purpose of this note is to establish two continuum theories for the bending and torsion of inextensible rods as $\Gamma$-limits of 3D atomistic models. In our derivation we study simultaneous limits of vanishing rod thickness $h$ and interatomic distance $\e$. First, we set up a novel theory for \textit{ultrathin rods} composed of finitely many atomic fibres ($\e\sim h$), which incorporates surface energy and new discrete terms in the limiting functional. This can be thought of as a contribution to the mechanical modelling of nanowires. Second, we treat the case where $\e\ll h$ and recover a  nonlinear rod model -- the modern version of Kirchhoff's rod theory.

\textit{Keywords:} discrete-to-continuum limits, dimension reduction, thin beams, elastic rod theory, $\Gamma$-convergence

\textit{Mathematics Subject Classification:} 74K10, 49J45
\end{abstract}
\section{Introduction}
Since the first boom in research on carbon nanotubes in the 1990s, we have been experiencing discoveries of a wide variety of 1D nanomaterials. These include nanowires, nanorods, nanopillars, and nanowhiskers \cite{nanoMod, nanopillars}, which find applications in electronics, photonics \cite{nanotech, NWsolar}, sensor design \cite{NWsense,ZnOsense} or biomedicine \cite{SiNW,nanowhiskers}.

As such thin structures only have tens of nanometres in diameter, they exhibit unusual deformation behaviour under external loads (e.g. great flexibility, anisotropy or surface effects). Despite the fast-paced progress, loading experiments remain challenging due to the need of specialized and highly precise measurement devices, so advances in mechanical modelling and computational studies of nanomaterials are still very desirable.

Elastic theories for one-dimensional rods or beams witness a long history (see \cite{Antman,OReilly} for an overview). An early milestone was marked in \cite{Kirchhoff} in 1859 and since then, Kirchhoff's rod theory has become the most widespread one for describing slender elastic bodies moving in 3D space (although a reformulation in modern notation is now used). The  elastic energy of an isotropic Kirchhoff rod with length $L$ can be expressed as
$$\mathcal{E}(y,d_2,d_3):=\frac{1}{2}\int_0^L E(I_2\kappa_2^2+I_3\kappa_3^2)+J\tau^2\md \x_1,$$
where $y\colon (0,L)\goto\R^3$ is the deformation of the rod and $d_2\colon (0,L)\goto\R^3$ and $d_3\colon (0,L)\goto\R^3$ are the so-called directors, which form an orthonormal frame ($\pl_{\x_1} y$,$d_2$,$d_3$) moving along $\x_1$.  The scalars $\kappa_2=\pl_{\x_1}^2 y\cdot d_2$ and $\kappa_3=\pl_{\x_1}^2 y\cdot d_3$ are called curvatures and $\tau=\pl_{\x_1} d_2\cdot d_3$ is the torsion. Young's modulus is denoted by $E$ and torsional rigidity by $J$ -- the latter is calculated for the shape of the cross section $S\subset\R^2$ of the rod. The second moments of area are $I_s=\int_S \x_s^2\md\x_2\md\x_3$ with $s=2$ or $s=3$.

In \cite{MMh4}, a nonlinear bending-torsion theory for inextensible rods was rigorously derived from three-dimensional elasticity using $\Gamma$-convergence. This theory, which was also independently obtained in \cite{Pantz}, embraces Kirchhoff rods as a special case. We refer to \cite{BraiBeg,BraiHand} for an introduction to $\Gamma$-convergence and to \cite{strings,membranes,FrM02,MMh6,Scardia,FrM06,Timoshenko,incompRods} for other results on mathematical derivation of dimensionally reduced theories in elasticity.

For the purposes of identification of Young's modulus and Poisson's ratio, Kirchhoff's rod theory has already been applied to nanowires.  \cite{KirchNW} However, the natural question arises whether atomistic effects should not be part of continuum theories for bodies which only consist of a few atomic layers in their transversal direction. Bearing this in mind, Friesecke and James proposed in \cite{FJ00} a method for deriving continuum models of 2D and 1D nanomaterials when in-plane strain is dominant (\textit{membrane theory}) and the approach was implemented rigorously in \cite{BS08} for thin films. The work \cite{BS06} focused on the bending of \textit{Kirchhoff's plates} and introduced a continuum theory for thin films which comprise no more than several layers. A similar derivation of \textit{von-Kármán's plate theory} has only been achieved recently \cite{BrS19}. To complete the survey of research on  microscopic origins of elasticity theory, we point the reader to \cite{Blanc,AC,Conti,EMing,BS09,BrS13,BrS16,Bach,ALP21} and the references therein.

In the present article, we treat continuum limits of discrete energies of the type
\begin{equation*}
E^{(k)}(y^{(k)})=\sum_{x\in\Lambda_{\e_k}'}W_{\rm cell}\bigl(\vec{y}^{\,(k)}(x)\bigr)+\text{surface terms},
\end{equation*}
where $\Lambda_{\e_k}'$ is an $\e_k$-fine cubic crystalline lattice in the shape of a thin rod, $y^{(k)}$ its deformation and the matrix $\vec{y}^{\,(k)}(x)$ describes the deformation of an atomic cube around the point $x$. Such cell energies $W_{\rm cell}$ cover the case of nearest neighbour and next-to-nearest neighbour interactions and appeared previously e.g. in \cite{ValFail,Conti,BS06}. 

Section 2 sets up basic notation and introduces  model assumptions that are common for the rest of this article. We also formulate a compactness theorem that complements theorems on $\Gamma$-convergence in the following sections. Having appeared in \cite{MMh4}, the result needs only minor adjustments in our discrete framework. 

\begin{figure}[h]
  \caption{An illustration of the simultaneous dimension reduction and discrete-to-continuum limit.}\label{fig:dimR}
  \centering
\includegraphics[width=7.5cm]{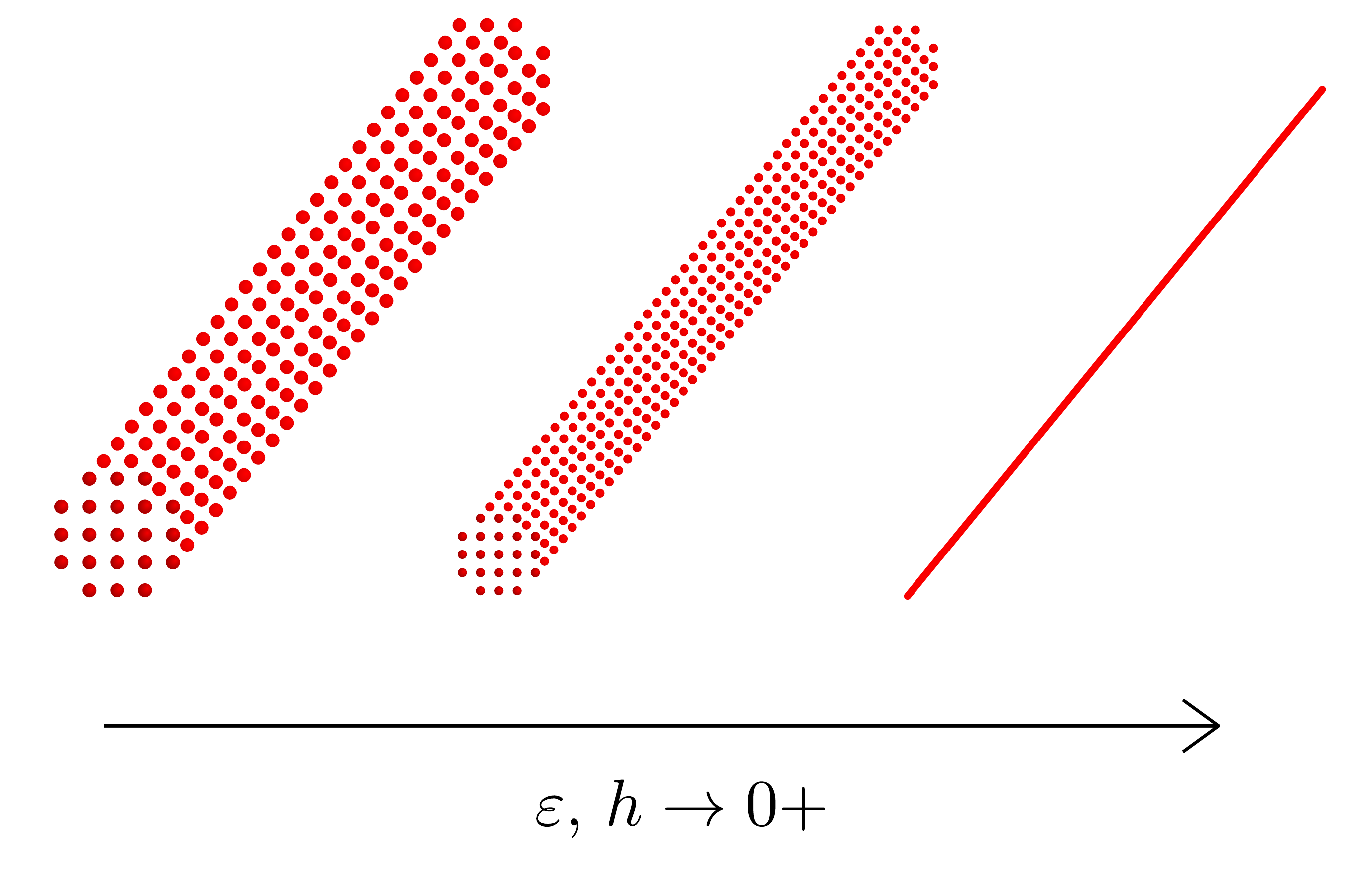}
\end{figure}
We seek a limiting energy functional $E_{\rm lim}$ for the continuum model. To get a nontrivial limit with $k\goto\infty$, we multiply the energy $E^{(k)}$ by the volume element $\e_k^3$ and divide it by the fourth power of the rod thickness $h_k$, which is the energy scaling corresponding to bending and torsion without extending the rod, cf. \cite{MMh4}.

We are interested in two possible limit processes, which yield different effective models in the end (see Figure \ref{fig:dimR} for an illustration).
\begin{enumerate}
\item To model an \textit{ultrathin rod} composed of a small number of atomic fibres, we let the interatomic distance $\e_k\goto 0+$ and keep $h_k/\e_k$ fixed. This is the content of Section~\ref{sec:U}, which includes the $\Gamma$-convergence Theorem \ref{GammaU} -- our main contribution. Remarkably, even though this new \textit{bending/torsion theory for ultrathin rods} thus derived can be related to the findings in \cite{MMh4}, our elastic energy functional features a so-called \textit{ultrathin correction} and surface terms, none of which would be present in a limiting theory based on the Cauchy-Born rule. Moreover, in the limiting functional we identify a discrete minimization formula accounting for warping the rod's cross section –  a more complex ingredient than in  plate theories from \cite{BS06} and \cite{BrS19}. With these traits, we believe that our proposed effective model might describe very thin 1D nanostructures more accurately than would conventional elasticity. 
\item When the numbers of atoms in the rod in the directions $x_1$, $x_2$, $x_3$ are large, we speak of a \textit{thin rod} and study the simultaneous limit with $\e_k\goto 0+$ and $h_k\goto 0+$ in such a way that $\frac{h_k}{\e_k}\goto\infty$. In this regime, which we investigate in Section~\ref{sec:T}, all discreteness fades away and we recover the continuum functional from \cite{MMh4} (see Theorem \ref{GammaT}).
\end{enumerate}

In our forthcoming paper \cite{fracRods} we will extend the results about ultrathin rods to brittle materials. Other approaches to nanowire mechanical modelling include \cite{genCB,quasiCont,MS15} and \cite{heliCB,torsVib}. Several works have also used couple-stress theories to account for size effects in Kirchhoff rods. \cite{coupStress}

For ease of notation, we only consider $\e_k:=1/k$ in the following, but it would also be possible to work with arbitrary interatomic distances, see \cite{BrS19}.

\section{Notation, common model assumptions}\label{sec:not}

\subsection{Basics}

If $S\subset\R^n$, we write $|S|$ for the $n$-dimensional Lebesgue measure of $S$. In the whole text, we reserve the letter $C$ for a generic positive constant whose value may vary from line to line, but is independent of the quantities involved in a limit passage. We use standard notation for function spaces: namely the Lebesgue spaces $L^p(\O;\R^n)$, $p\in[1,\infty]$, Sobolev spaces $H^m(\O;\R^n)=W^{m,2}(\O;\R^n)$, $m\in\N$, and weak convergence ($f_k\weakto f$). Further, $A_{\bullet j}$ denotes the $j$-th column vector of a matrix $A\in\R^{m\times n}$; $\R^{3\times 3}_{\rm skew}$ stands for the space of all 3-by-3 skew-symmetric matrices; $e_i=\Id_{\bullet i}$, $1\leq i\leq 3$, are the standard basis vectors in $\R^3$, and $|u|$ and $|A|=\sqrt{\mathrm{Tr}\,\T{A}A}$ denote the Euclidean and Frobenius norms of $u\in\R^n$ and $A\in\R^{m\times n}$, respectively. \tc{All vectors, unless otherwise specified, are treated as column vectors.} For an open set $\O\subset\R^n$ we write $\O'\subset\subset\O$ if $\bar{\O}'\subset\O$ and $\bar{\O}'$ is compact. Finally, $V^\bot$ is the orthogonal complement of a subspace $V$ in an inner product space $X$.

\subsection{Discrete model}\label{sec:DiscreteModel}
Our starting point is an atomistic interaction model for an elastic rod. We consider a cubic atomic lattice $\Lambda_k$, \tc{given by}
$$\Lambda_k=\Bigl([0,L]\times\frac{1}{k}\overline{S_k}\Bigr)\cap\frac{1}{k}\Z^3,$$
where $\frac{1}{k}$ is the interatomic spacing and $L>0$ denotes the length of the rod. Its cross section is the polygonal set $\emptyset\neq S_k\subset\R^2$ (possibly not simply connected) determining a cross-sectional lattice $\mathcal{L}_k:=\overline{S_k}\cap \Z^2$ and for which there is a set $\mathcal{L}'_k\subset(\frac{1}{2}+\Z)^2$ such that
\begin{align}\label{eq:Sk-def}
  S_k=\mathrm{Int}\,\bigcup_{\xb'\in\mathcal{L}'_k}\Bigl(\xb'+\Bigl[-\frac{1}{2},\frac{1}{2}\Bigr]^2\Bigr).
\end{align}
(It is assumed that $\xb'\in\mathcal{L}_k'$ whenever $\xb'+\{-\frac{1}{2},\frac{1}{2}\}^2\subset\mathcal{L}_k$.) If $S_k=S$ is a fixed cross section that does not depend on $k$ we will speak of an {\em ultrathin} rod. \tc{The rod's thickness} is then comparable to the typical interatomic spacing. \tc{By contrast, in a {\em thin} rod the scaled cross section $\frac{1}{k}S_k$} eventually exhausts a domain of diameter $h$, where $\frac{1}{k} \ll h \ll 1$. We use the symbol $\Lambda_k'$ for the lattice of midpoints of open cubes with sidelength $1/k$ and corners in $\Lambda_k$. 

These set-ups may be described simultaneously by our fixing a positive null sequence $(h_k)$ with $h_k \ge 1/k$ that we choose as equal to $1/k$ in the ultrathin case and for which we suppose $kh_k \to \infty$ for merely thin rods. We then assume that there exists a fixed bounded Lipschitz domain $S \subset \R^2$ such that the above $S_k$ is the unique largest (in terms of cardinality) connected set of the form \eqref{eq:Sk-def} that is contained in $kh_k S$.

The lattice $\Lambda_k$ corresponds to an undeformed reference configuration that is subject to a static deformation $y^{(k)}\colon \Lambda_k\goto\R^3$, which stores elastic energy into the rod. As the energy originates from interactions of nearby atoms we introduce a rescaling to atomic units by passing to a rescaled lattice with unit distances between atoms. 

\tc{Points in this lattice are distinguished using the hat diacritic  -- here for $x=(x_1, x_2, x_3)\in\R^3$ we write $\hat{x}_1:=kx_1$, $\hat{x}'=(\hat{x}_2,\hat{x}_3):=kx'=k(x_2,x_3)$} and $\hat{y}^{(k)}(\hat{x}_1,\hat{x}_2,\hat{x}_3):=k y^{(k)}(\frac{1}{k}\hat{x}_1,\frac{1}{k}\hat{x}')$ so that $\hat{y}^{(k)}\colon k\Lambda_k\to\R^3$. Then $\hat{\Lambda}_k$, $\hat{\Lambda}'_k$ stand for the sets of all $\hat{x}=(\hat{x}_1,\hat{x}_2,\hat{x}_3)$ such that the corresponding downscaled points $x$ lie in the lattices $\Lambda_k$, $\Lambda_k'$, respectively. We introduce eight direction vectors $\zf^1,\dots,\zf^8$:

\begin{center}
\begin{tabular}{l l}
$\zf^1 = \frac{1}{2}\T{(-1,-1,-1)}$, & $\zf^5 = \frac{1}{2}\T{(+1,-1,-1)},$\\
$\zf^2 = \frac{1}{2}\T{(-1,-1,+1)}$, & $\zf^6 = \frac{1}{2}\T{(+1,-1,+1)},$\\
$\zf^3 = \frac{1}{2}\T{(-1,+1,+1)}$, & $\zf^7 = \frac{1}{2}\T{(+1,+1,+1)},$\\
$\zf^4 = \frac{1}{2}\T{(-1,+1,-1)}$, & $\zf^8 = \frac{1}{2}\T{(+1,+1,-1)}$.
\end{tabular}
\end{center}
This allows us to collect into a matrix the information about the deformation of a unit cell $\hat{x}+\{-\frac{1}{2},\frac{1}{2}\}^3$, $\hat{x}\in\hat{\Lambda}'_k$: 
$$\vec{y}^{\,(k)}(\hat{x})=(\hat{y}^{(k)}(\hat{x}+\zf^1)|\cdots|\hat{y}^{(k)}(\hat{x}+\zf^8))\in\R^{3\times 8}.$$ 
With $\langle \hat{y}^{(k)}(\hat{x})\rangle=\frac{1}{8}\sum_{i=1}^8 \hat{y}^{(k)}(\hat{x}+\zf^i)$, $\hat{x}\in\hat{\Lambda}'_k$ we further define the discrete gradient 
\tc{$$\bar{\nabla}\hat{y}^{(k)}(\hat{x})=\vec{y}^{\,(k)}(\hat{x})-\langle \hat{y}^{(k)}(\hat{x})\rangle(1,\ldots,1)\in\R^{3\times 8}.$$}
Then the matrix $\bar{\Id}=(\zf^1|\cdots|\zf^8)\in\R^{3\times 8}$ is the discrete gradient of $\hat{y}^{(k)}=\mathrm{id}$. Note that a discrete gradient has the sum of columns equal to $0$.

There are two more important subsets of $\R^{3\times 8}$:
$$\bar{\rm SO}(3):=\{R\,\bar{\Id};\;R\in\mathrm{SO}(3)\},\quad V_0:=\{(c|\cdots|c)\in\R^{3\times 8};\;c\in\R^3\}.$$

\subsection{Rescaling, interpolation and extension}\label{sec:inter}
It is desirable to have the deformations defined on a common domain $\O:=(0,L)\times S$, independent of $k$, in order to handle their convergence. \tc{Given a positive null sequence $(h_k)$ such that $h_k \ge 1/k$ (and  $h_k=1/k$} in the ultrathin case) set $\yk(\x_1,\x_2,\x_3):=y^{(k)}(\x_1,h_k\x')$ for $(\x_1,h_k\x')\in\Lambda_k$. Furthermore, we introduce an interpolation of $\ybk$ so that it is also defined outside lattice points.

Let $\zfb^i=(\frac{1}{k}\zf_1^i,\frac{1}{kh_k}\zf_2^i,\frac{1}{kh_k}\zf_3^i)$ and $\tilde{\Lambda}_k'=\{\xi\in\R^3;\;(k\xi_1,kh_k\xi')\in\hat{\Lambda}_k'\}$. We split every block $\Qb(\bar{\xb})=\bar{\xb}+[-\frac{1}{2k},\frac{1}{2k}]\times[-\frac{1}{2kh_k},\frac{1}{2kh_k}]^2$, $\bar{\xb}\in\tilde{\Lambda}_k'$, into 24 simplices as in \cite{BS06,BrS19} and get a piecewise affine interpolation of $\ybk$, which we denote again by $\ybk$. More precisely, set $\ybk(\bar{\xb}):=\frac{1}{8}\sum_{i=1}^8\ybk(\bar{\xb}+\zfb^i)$ and for each face $\tilde{F}$ of the block $\Qb(\bar{\xb})$ and the corresponding centre $\xb_{\tilde{F}}$ of the face $\tilde{F}$, define $\ybk(\xb_{\tilde{F}}):=\frac{1}{4}\sum_j\ybk(\bar{\xb}+\zfb^j)$, where we sum over all $j$ such that $\bar{\xb}+\zfb^j$ is a corner of $\tilde{F}$. In fact, a face can be labelled as $\tilde{F}_{ij}$ if it has $\bar{\xb}+\zfb^i$ and $\bar{\xb}+\zfb^j$ such that $|\zf^i-\zf^j|=1$ as vertices; the ambiguity in this notation can be resolved by using the order of indices. Then, let $\ybk$ be interpolated in an affine way on every $\tilde{T}_{ij}=\mathrm{conv}\{\bar{\xb},\bar{\xb}+\zfb^i,\bar{\xb}+\zfb^j,\xb^{ij}\}$ with $\xb^{ij}$ being the centre of the face $\tilde{F}_{ij}$, so that $\ybk$ is everywhere continuous.

We thus obtain $\ybk\colon [0,L_k]\times\tc{\bar{S}_k}\to\R^3$, where we have abbreviated $L_k := \lfloor kL\rfloor/k$. It satisfies
\begin{equation}\label{eq:surfVolMean}
\ybk(\xb_{\tilde{F}})=\dashint_{\tilde{F}}\ybk\md \mathcal{H}^2,\quad\ybk(\bar{\xb})=\dashint_{\Qb(\xb)}\ybk(\xi)\md\xi
\end{equation}
for any face $\tilde{F}$ of $\Qb(\xb)$ with face centre $\xb_{\tilde{F}}$.

\tc{Setting $\nabla_k\ybk:=\bigl(\frac{\pl \yk}{\pl \x_1}\,\Big|\,h_k^{-1}\frac{\pl \yk}{\pl \x_2}\,\Big|\,h_k^{-1}\frac{\pl \yk}{\partial \x_3}\bigr)$, we proceed with an auxiliary result.}

\begin{lemma}\label{equivNorms}
There are $c,C>0$ such that for any $k\in\N$, $h_k>0$ and lattice block $\Qb(\bar{\x})=\bar{\x}+[-\frac{1}{2k},\frac{1}{2k}]\times[-\frac{1}{2kh_k},\frac{1}{2kh_k}]^2$
with centre $\bar{\x}\in\tilde{\Lambda}_k'$ and \tc{corresponding $\hat{x}=(k\bar{\x}_1,kh_k\bar{\x}')\in\hat{\Lambda}_k'$, 
\begin{align}\label{eq:norms}
  c|\bar{\nabla} \hat{y}^{(k)}(\hat{x})|^2\leq k^3h_k^2\int_{\tilde{Q}(\bar{\x})} |\nabla_k \yk|^2\md\xi\leq C|\bar{\nabla} \hat{y}^{(k)}(\hat{x})|^2. 
\end{align}}
\end{lemma}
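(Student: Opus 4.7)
The plan is to reduce \eqref{eq:norms} to a universal norm-equivalence on the unit cube by an anisotropic change of variables. Setting $z=(k\bar{z}_1,kh_k\bar{z}_2,kh_k\bar{z}_3)$ for $\bar{z}=\xi-\bar{\x}$ maps the block $\Qb(\bar{\x})$ onto $[-\frac{1}{2},\frac{1}{2}]^3$ with constant Jacobian $(k^3h_k^2)^{-1}$. This rescaling is tailored exactly so that the anisotropic gradient $\nabla_k\yk$ pulls back to an ordinary gradient: the factors $kh_k$ arising in $\pl z_2/\pl\xi_2$ and $\pl z_3/\pl\xi_3$ are absorbed by the $h_k^{-1}$ prefactors in the transversal columns of $\nabla_k$, while the longitudinal factor $k\cdot\pl\xi_1/\pl z_1=1$ is already trivial.

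Concretely, I would introduce $\phi\colon[-\frac{1}{2},\frac{1}{2}]^3\to\R^3$ as the piecewise affine function produced by the same 24-simplex interpolation rule as in Section~\ref{sec:inter}, now applied to the corner values $\phi(\zf^i):=\hat{y}^{(k)}(\hat{x}+\zf^i)$. A direct check at the nodal points -- at the corners via $\yk(\bar{\x}+\zfb^i)=\frac{1}{k}\hat{y}^{(k)}(\hat{x}+\zf^i)$, at the cube centre via $\yk(\bar{\x})=\frac{1}{k}\langle\hat{y}^{(k)}(\hat{x})\rangle$, and at the face centres via the analogous quadrilateral average -- shows that $\yk(\xi)=k^{-1}\phi(z)$ on the whole block. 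Differentiating gives $\nabla_z\phi(z)=\nabla_k\yk(\xi)$, and the change-of-variables formula yields
\begin{equation*}
  k^3 h_k^2\int_{\Qb(\bar{\x})}|\nabla_k\yk|^2\,\md\xi=\int_{[-\frac{1}{2},\frac{1}{2}]^3}|\nabla\phi|^2\,\md z.
\end{equation*}

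It remains to compare the right-hand side with $|\bar{\nabla}\hat{y}^{(k)}(\hat{x})|^2=\sum_{i=1}^{8}|\phi(\zf^i)-\langle\phi\rangle|^2$. Both quantities are positive semi-definite quadratic forms on the $24$-dimensional space of piecewise affine interpolants of the above type, parametrised by the eight corner values in $\R^3$. Each form vanishes precisely when $\phi$ is constant on the cube: for the discrete form this is immediate, and for the Dirichlet integral it follows from continuity of $\phi$ across simplex boundaries. Two quadratic forms on a finite-dimensional space with the same null space are equivalent, so universal $c,C>0$ exist, independent of $k$ and $h_k$, for which \eqref{eq:norms} holds.

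The main obstacle is really just the bookkeeping of the anisotropic rescaling, namely verifying that the prescribed 24-simplex interpolation commutes with the stretching $\Qb(\bar{\x})\to[-\frac{1}{2},\frac{1}{2}]^3$ and that the $\nabla_k$ convention exactly compensates the non-cubic block geometry. Once the picture is pulled back to the reference cube, the norm-equivalence step is a standard finite-dimensional argument.
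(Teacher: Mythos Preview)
Your argument is correct. The paper itself does not give a proof but simply refers to \cite[Lemma~3.5]{BS09}; your anisotropic rescaling to the reference cube together with the finite-dimensional equivalence of the two quadratic forms (both vanishing exactly on constants) is precisely the standard argument underlying that citation.
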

\begin{proof}
The statement is contained in \cite[Lemma~3.5]{BS09}.
\end{proof}

We now construct an extension to `ghost atoms' in a tubular neighbourhood of the rod whose rigidity is controlled by the original atom positions. For $m\in\N$ set 
\begin{align*}
\mathcal{L}^{\rm ext}_k
&=\mathcal{L}_k+\{-m,\ldots,m\}^2,
& 
\Lkex
&=\{-\tfrac{1}{k},0,\ldots,L_k+\tfrac{1}{k}\}\times \tfrac{1}{k}\mathcal{L}^{\rm ext}_k,
\\
\mathcal{L}'^{,\rm ext}_k
&=\mathcal{L}'_k+\{-m,\ldots,m\}^2
&
\Lkexc
&=\{-\tfrac{1}{2k},\tfrac{1}{2k},\ldots,L_k+\tfrac{1}{2k}\}\times \tfrac{1}{k}\mathcal{L}'^{,\rm ext}\\ 
S^{\rm ext}_k
&=S_k+(-m,m)^2,
&
\Omega^{\rm ext}_k
&= (-\tfrac{1}{k}, L_k+\tfrac{1}{k}) \times \frac{1}{kh_k} S^{\rm ext}_k.
\end{align*}
We suppress $m$, which will be a fixed constant, from our notation. It will be equal to $1$ for ultrathin rods and $\ge 1$ such that $S^{\rm ext}_k \supset kh_k S$ for thin rods. We also consider the lattices $\Lkexb$ and $\Lkexcb$ that are related to their unrescaled versions $\Lkex$ and $\Lkexc$ like we saw it for $\tilde{\Lambda}_k'$ above. 

Our extension follows a scheme from \cite[Section~3.1]{BS09}, see in particular \cite[Lemmas~3.1, 3.2 and~3.4]{BS09} and cf.\ also \cite[Lemma~3.1]{BrS19}. Notice that for our choice of $S_k$ as the largest connected set of the form \eqref{eq:Sk-def} that is contained in $kh_k S$ for a bounded Lipschitz domain $S \subset \R^2$ in particular guarantees that there is a constant $C > 0$, independent of $k$, such that for any two points \tc{$\hat{x}',\hat{y}' \in \mathcal{L}_k'$
$$ \mathrm{dist}_{\mathcal{L}_k'}(\hat{x}',\hat{y}') 
   \le C|\hat{x}'-\hat{y}'|, $$ 
where 
$$ \mathrm{dist}_{\mathcal{L}_k'}(\hat{x}',\hat{y}') 
   = \min \bigl\{ N \in \N_0 : \exists\, \hat{x}'=\hat{x}'_0, \ldots, \hat{x}'_N=\hat{y}' \in \mathcal{L}_k' \text{ with }  |\hat{x}_{n+1}-\hat{x}_n|=1\, \forall\, n < N \bigr\}
$$
denotes the lattice geodesic distance of two elements $\hat{x}',\hat{y}' \in \mathcal{L}_k'$.}

\begin{lemma}\label{lemma:ext-general}
There are extensions $y^{(k)}\colon \Lkex \to \R^3$ such that their interpolations $\ybk$ satisfy
\begin{align*}
\operatorname{ess\, sup}_{\Omega^{\rm ext}_k} \mathrm{dist}^2(\nabla_k \ybk,\mathrm{SO}(3)) 
&\le C \operatorname{ess\, sup}_{(0,L_k)\times \frac{1}{kh_k} S_k} \mathrm{dist}^2(\nabla_k \ybk,\mathrm{SO}(3)) 
\shortintertext{and}
\int_{\Omega^{\rm ext}_k} \mathrm{dist}^2(\nabla_k \ybk,\mathrm{SO}(3)) \md x 
&\le C \int_{(0,L_k)\times \frac{1}{kh_k} S_k} \mathrm{dist}^2(\nabla_k \ybk,\mathrm{SO}(3)) \md x. 
\end{align*}
\end{lemma}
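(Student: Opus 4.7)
The plan is to carry over the atom-by-atom extension scheme from \cite[Section~3.1]{BS09} (cf.\ also \cite[Lemma~3.1]{BrS19}) to the present rod geometry. In the unit-distance (hat) coordinates I extend $\hat{y}^{(k)}$ from $\hat{\Lambda}_k$ to $\hat{\Lambda}_k^{\mathrm{ext}}$ by assigning to each ghost atom a value obtained by affine extrapolation from a nearby interior unit cell. The two inequalities then follow from two elementary facts: an affine map has a constant pointwise distance to $\mathrm{SO}(3)$, and each interior cell will serve as the anchor for only a uniformly bounded number of ghost cells.

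Concretely, for every ghost cell-centre $\hat{z} \in \hat{\Lambda}_k'^{,\mathrm{ext}} \setminus \hat{\Lambda}_k'$ I select an \emph{anchor} $\hat{z}^{*} \in \hat{\Lambda}_k'$ at uniformly bounded distance $|\hat{z}-\hat{z}^{*}| \le C$, so that the map $\hat{z} \mapsto \hat{z}^{*}$ has uniformly bounded multiplicity. Such an assignment exists because the collar width $m$ is a fixed constant and $S$ is a bounded Lipschitz domain; via the lattice-geodesic-distance bound recalled just before the lemma, this guarantees that every ghost cell lies at $O(1)$ lattice steps from an interior one. To each interior anchor $\hat{z}^{*}$ I associate a rotation $R^{*} \in \mathrm{SO}(3)$ minimising $|\bar{\nabla}\hat{y}^{(k)}(\hat{z}^{*}) - R^{*}\bar{\mathrm{Id}}|$ and set $\hat{y}^{(k)}(\hat{x}) := R^{*}(\hat{x}-\hat{z}^{*}) + \langle \hat{y}^{(k)}(\hat{z}^{*})\rangle$ for every ghost atom $\hat{x}$ anchored to $\hat{z}^{*}$; atoms that lie in several ghost cubes are disambiguated by a fixed tie-breaking rule. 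At the interface between the extension and the original rod, or between two adjacent anchor domains, the mismatch of the affine extrapolation from the true atom positions is controlled, thanks to the optimality of $R^{*}$ in the Frobenius norm, by a constant times $\mathrm{dist}(\bar{\nabla}\hat{y}^{(k)}(\hat{z}^{*}),\bar{\mathrm{SO}}(3))$, so the discrete gradient of every cell of $\hat{\Lambda}_k^{\mathrm{ext}}$ remains within that constant multiple of $\bar{\mathrm{SO}}(3)$.

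To conclude, I translate these cell-wise bounds into the rescaled continuum through Lemma~\ref{equivNorms}, which equates, up to the fixed factor $k^3 h_k^2$, the block integral of $\mathrm{dist}^2(\nabla_k\ybk,\mathrm{SO}(3))$ on $\Q(\bar{x})$ with $\mathrm{dist}^2(\bar{\nabla}\hat{y}^{(k)}(\hat{x}),\bar{\mathrm{SO}}(3))$. The essential-sup bound then is immediate since the distance on a ghost block is controlled by that of its anchor. The integral bound follows by summing over the ghost blocks and exploiting the uniformly bounded multiplicity of the anchor map to pass to a sum over interior blocks. The real obstacle is not any single one of these estimates but the combinatorial-geometric construction of the anchor map itself: one must verify simultaneously that its multiplicity is $O(1)$ and that the resulting piecewise affine interpolation $\ybk$ is continuous on the full extended domain $\Omega^{\mathrm{ext}}_k$. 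Both requirements reduce, thanks to the fixed collar width $m$ and the Lipschitz regularity of $S$, to the analogous plate-theoretic construction already carried out in \cite{BS09,BrS19}, which one transplants to the rod geometry with only obvious modifications.
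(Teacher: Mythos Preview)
Your scheme is close in spirit to the paper's but differs in a way that opens a genuine gap. The paper does \emph{not} pick a single anchor cell and affinely extrapolate. Instead it partitions $\Lkexc\setminus\Lambda_k'$ into the eight sublattices $\Lambda_{k,i}'=(\Lkexc\setminus\Lambda_k')\cap\frac1k(\zf^i+2\Z^3)$ and, consecutively for $i=1,\dots,8$, extends $y^{(k)}$ to each new cell by \emph{minimising} ${\rm dist}^2(\bar{\nabla}\hat{y}^{(k)}(\hat{x}),\bar{\rm SO}(3))$ over the newly created atom positions, given whatever values (original or already-extended) are present at the other corners. Repeating this $8m$ times reaches all of $\Lkex$. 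The point of the iteration is that at every step the new cell's distance is controlled by the distances of its already-defined neighbours, so after $8m$ steps every ghost cell is bounded by a sum over a uniformly bounded ball in $\hat{\Lambda}_k'$; the lattice-geodesic property is used here to guarantee that such a ball always contains interior cells.

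In your direct construction the problematic step is the sentence ``At the interface \ldots\ between two adjacent anchor domains, the mismatch \ldots\ is controlled \ldots\ by a constant times $\mathrm{dist}(\bar{\nabla}\hat{y}^{(k)}(\hat{z}^{*}),\bar{\mathrm{SO}}(3))$''. This is not true as stated. If a ghost cell has corners assigned from two different anchors $\hat{z}_1^{*},\hat{z}_2^{*}$ with optimal rotations $R_1^{*},R_2^{*}$, the discrete gradient of that cell involves both $R_1^{*}-R_2^{*}$ and $\langle\hat{y}^{(k)}(\hat{z}_1^{*})\rangle-\langle\hat{y}^{(k)}(\hat{z}_2^{*})\rangle-R_1^{*}(\hat{z}_1^{*}-\hat{z}_2^{*})$. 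Neither quantity is controlled by a \emph{single} anchor's distance to $\bar{\rm SO}(3)$; you need to chain through a lattice path in $\mathcal{L}_k'$ connecting $\hat{z}_1^{*}$ and $\hat{z}_2^{*}$ and sum the cell-wise distances along it. The lattice-geodesic bound is needed precisely here---not merely to locate an anchor---to ensure this path has uniformly bounded length. Once you replace your single-anchor estimate by a sum over such a bounded neighbourhood, both the $L^\infty$ and $L^2$ conclusions go through (the bounded-multiplicity argument still works because each interior cell lies on only boundedly many such paths). The paper's iterative minimisation is simply a cleaner way to encode this chaining without having to track paths explicitly.
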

\begin{proof}
Let $y^{(k)}\colon\Lambda_k \to \R^3$ be a lattice deformation. We partition $\Lkexc \setminus \Lambda_k'$ into the $8$ sublattices $\Lambda_{k,i}' = (\Lkexc \setminus \Lambda_k') \cap \frac{1}{k} ( \zf^i + 2\Z^3 )$ and apply the following extension procedure consecutively for $i = 1, \ldots, 8$:  

If $x \in \Lambda_{k,i}'$ we write $\mathcal{B}_R(x)$ for the set of those $z\in\Lkexc$ with $|z-x| \le R/k$ for which $y^{(k)}(z+\frac{1}{k}\zf^j)$ is defined already for all $1 \le j \le 8$. Now if $\mathcal{B}_1(x) \ne \emptyset$, extend $y^{(k)}$ to all $z+\frac{1}{k}\zf^j$, $1 \le j \le 8$, by choosing an extension such that ${\rm dist}^2(\bar{\nabla}\hat{y}^{(k)}(\hat{x}), {\rm SO}(3)\bar{\Id})$ is minimal. 

Due to \cite[Lemma~3.1]{BS09} and \tc{the property} of lattice geodesics within $\mathcal{L}_k'$, this distance will then be controlled by 
\[ C \sum_{z\in\mathcal{B}_R(x)} {\rm dist}^2(\bar{\nabla}\hat{y}^{(k)}(\hat{z}), \bar{\rm SO}(3)), \] 
for some uniformly bounded $R$. We repeat this extension step $8m$ times. 
\end{proof}

\begin{rem}
The construction implies that for ultrathin rods, the following local estimate holds: For any $x\in\Lkexc$, defining $\mathcal{U}(x)=\bigl(\{x_1-\frac{1}{k},x_1,x_1+\frac{1}{k}\}\times\frac{1}{k}\mathcal{L}'\bigr) \cap \Lambda_k'$ we have 
\begin{equation*}
\mathrm{dist}^2(\bar{\nabla}\hat{y}^{(k)}(\hat{x}),\bar{\rm SO}(3))\leq C\sum_{\xi\in k\mathcal{U}(x)}\mathrm{dist}^2(\bar{\nabla}\hat{y}^{(k)}(\xi),\bar{\rm SO}(3)).
\end{equation*}
\end{rem}

\subsection{Elastic energy}
\tc{In the expression for total elastic energy, we group contributions from individual atomic cells} (cf.\ \cite{Conti,BS06}). 

\begin{defn}\label{admCellF}
We say that $W\colon\R^{3\times 8}\goto [0,\infty)$ is a \emph{full cell energy function} if the following assertions hold true:
\begin{enumerate}
\item[(E1)] Frame-indifference: $W(R\vec{y}+(c|\cdots|c))=W(\vec{y})$, $R\in {\rm SO}(3)$, $\vec{y}\in \R^{3\times 8}$, $c\in \R^3$,
\item[(E2)] $W$ attains its minimum (equal to 0) at and only at all rigid deformations, i.e. deformations $\vec{y}=(\hat{y}_1|\cdots|\hat{y}_8)$ with $\hat{y}_i=R\zf^i+c$ for all $i\in\{1,\dots,8\}$ and some $R\in\mathrm{SO}(3)$, $c\in\R^3$,
\item[(E3)] $W$ is everywhere Borel measurable and 
of class $\mathcal{C}^2$ in a neighbourhood of $\bar{\rm SO}(3)$ and the quadratic form associated with $\nabla^2 W(\bar{\Id})$ is positive definite when restricted to $\mathrm{span}\{V_0\cup\R_{\rm skew}^{3\times 3}\bar{\Id}\}^\bot$,
\item[(E4)] $\liminf_{\substack{|\vec{y}|\goto\infty,\\ \vec{y}\in V_0^\bot}}\frac{W(\vec{y})}{|\vec{y}|^2}>0$.
\end{enumerate}
We say that $W\colon\R^{3\times 8}\goto [0,\infty)$ is a \emph{partial cell energy function} if it fulfils (E1) together with
\begin{enumerate}
\item[(E2')] $W$ equals zero for all rigid deformations,
\item[(E3')] $W$ is everywhere Borel measurable and 
of class $\mathcal{C}^2$ in a neighbourhood of $\bar{\rm SO}(3)$.
\end{enumerate}
\end{defn}
Trivially, we see that $W\equiv 0$ is a partial cell energy function.

To model surface energy, let $\mathfrak{T}$ be the power set of $\{1,\ldots,8\}$. We classify the cells centred at $\hat{\Lambda}_k'^{,\mathrm{ext}} = k \Lkexc$ by the set of corners they share with $\hat{\Lambda}_k$, i.e. $\mathfrak{t}_k(\hat{x}) = \big\{ i \in \{1, \ldots, 8\} : \hat{x}+\zf^i \in \hat{\Lambda}_k\bigr\}$ for $\hat{x} \in \hat{\Lambda}_k'^{,\mathrm{ext}}$. (Obviously, $\mathfrak{t}_k(\hat{x}) = \{1, \ldots, 8\}$ iff $\hat{x}\in\hat{\Lambda}_k'$ and $\mathfrak{t}_k(\hat{x})\neq \emptyset$ iff $\hat{x}\in\hat{\Lambda}_k'^{,\rm ext}$ for the specific choice $m=1$.) Also note that on the lateral boundary, i.e.\ for $\hat{x}_1 \notin \{-\frac{1}{2}, kL_k+\frac{1}{2}\}$, we have $i \in \mathfrak{t}_k(\hat{x})$ iff $i+4 \in \mathfrak{t}_k(\hat{x})$ for $i=1,2,3,4$ and so $\mathfrak{t}_k(\hat{x})=\mathfrak{t}_k(\hat{x}'):=\big\{ i \in \{1, \ldots, 8\} : \hat{x}'+(\zf^i)' \in \mathcal{L}_k\bigr\}$. Let $\hat{\Lambda}_k'^{,\mathrm{surf}} = \{\frac{1}{2},\ldots,kL_k-\frac{1}{2}\}\times (\mathcal{L}_k'^{,\rm ext} \setminus \mathcal{L}_k')$ and $\hat{\Lambda}_k'^{,\mathrm{end}} = \{-\frac{1}{2},kL_k+\frac{1}{2}\} \times \mathcal{L}_k'^{,\rm ext}$. Our total elastic interaction energy reads 
\begin{align}\label{eq:EkU}
\begin{split}
E^{(k)}(y^{(k)})
&=\sum_{\hat{x}\in \hat{\Lambda}_k'} W_{\rm cell}\big(\vec{y}^{\,(k)}(\hat{x})\big)+\sum_{\hat{x}\in\hat{\Lambda}_k'^{,\mathrm{surf}}} W_{\rm surf}\big(\mathfrak{t}_k(\hat{x}'),\vec{y}^{\,(k)}(\hat{x})\big)\\
&\quad+\sum_{ \hat{x}\in \hat{\Lambda}_k'^{,\mathrm{end}}} W_{\rm end}\big(\mathfrak{t}_k(\hat{x}),\vec{y}^{\,(k)}(\hat{x})\big),
\end{split}
\end{align}
where $W_{\rm cell}$ is a full cell energy and $W_{\rm surf}(\mathfrak{t},\cdot)$, $W_{\rm end}(\mathfrak{t},\cdot)$ with $\mathfrak{t} \in \mathfrak{T}$ are partial cell energy functions according to Definition \ref{admCellF}. In order to avoid artificial contributions we assume that the values of $W_{\rm surf}(\mathfrak{t},\vec{y})$ and $W_{\rm end}(\mathfrak{t},\vec{y})$, $\vec{y}=(\hat{y}_1|\cdots|\hat{y}_8)$, may depend on $\hat{y}_i$ only if $i \in \mathfrak{t}$. 

We remark that the terms involving $W_{\rm end}$ for cells near the rod's endpoints vanish as $k\goto\infty$ for both ultrathin and thin rods. While for thin rods also the lateral boundary contributions vanish, this is no longer the case for ultrathin rods. Our set-up allows us to model extra-cross-sectional interactions of atoms which lie in different atomic cells but which are, in fact, their mutual neighbours due to the cross section's potentially jagged shape. 

We write $Q_{\rm cell}$ and $Q_{\rm surf}(\mathfrak{t},\cdot)$ for the quadratic forms generated by $\nabla^2 W_{\rm cell}(\bar{\Id})$ and $\nabla^2 W_{\rm surf}(\mathfrak{t},\bar{\Id})$, $\mathfrak{t} \in \mathfrak{T}$, respectively. 

\begin{example}\label{exampU}
To explain the motivation behind $W_{\rm surf}$ and $W_{\rm end}$, let us consider a simple mass-spring model with harmonic springs for a rod with its cross section determined by \tc{$\mathcal{L}_k'\equiv\mathcal{L}'=\{-\frac{1}{2},\frac{1}{2},\frac{3}{2}\}^2\cup\{(\frac{1}{2},-\frac{3}{2}),(\frac{1}{2},\frac{5}{2})\}$ ($m:=1$)}. The aim is to rewrite
$$E^{(k)}(y)=\frac{1}{2}\sum_{\substack{\hat{x}_*,\hat{x}_{**}\in\hat{\Lambda}_k\\ |\hat{x}_*-\hat{x}_{**}|=1}}\frac{K_1}{2}(|\hat{y}(\hat{x}_*)-\hat{y}(\hat{x}_{**})|-1)^2+\frac{1}{2}\sum_{\substack{\hat{x}_*,\hat{x}_{**}\in\hat{\Lambda}_k\\ |\hat{x}_*-\hat{x}_{**}|=\sqrt{2}}}\frac{K_2}{2}(|\hat{y}(\hat{x}_*)-\hat{y}(\hat{x}_{**})|-\sqrt{2})^2$$
using $W_{\rm cell}$, $W_{\rm surf}$, and $W_{\rm end}$ ($K_1>0$ and $K_2>0$ are constant stiffnesses). While in the bulk, we set
\begin{equation*}
W_{\rm cell}(\vec{y})=\frac{1}{8}\sum_{|\zf^i-\zf^j|=1}\frac{K_1}{2}(|\hat{y}_i-\hat{y}_j|-1)^2+\frac{1}{4}\sum_{|\zf^i-\zf^j|=\sqrt{2}}\frac{K_2}{2}(|\hat{y}_i-\hat{y}_j|-\sqrt{2})^2,
\end{equation*}
the functions \tc{$W_{\rm surf}(\mathfrak{t}_k(\frac{5}{2},\frac{3}{2}),\cdot)$, $W_{\rm surf}(\mathfrak{t}_k(-\frac{3}{2},\frac{3}{2}),\cdot)$ etc., and $W_{\rm end}$ in turn include surface terms, e.g.
\begin{gather*}
W_{\rm surf}\Bigl(\mathfrak{t}_k\bigl(\tfrac{5}{2},\tfrac{1}{2}\bigr),\vec{y}\Bigr)=W_{\rm surf}\bigl(\{3,4,7,8\},\vec{y}\bigr)=\sum_{i\in\{3,7\}}\frac{K_1}{8}(|\hat{y}_{i+1}-\hat{y}_i|-1)^2\\
+\sum_{i=3}^4\frac{K_1}{8}(|\hat{y}_{i+4}-\hat{y}_i|-1)^2+\frac{K_2}{4}(|\hat{y}_7-\hat{y}_4|-\sqrt{2})^2+\frac{K_2}{4}(|\hat{y}_8-\hat{y}_3|-\sqrt{2})^2,\\
W_{\rm end}\Bigl(\mathfrak{t}_k\bigl(-\tfrac{1}{2},\tfrac{1}{2},\tfrac{1}{2}\bigr),\vec{y}\Bigr)=W_{\rm end}\bigl(\{1,2,3,4\},\vec{y}\bigr)=\sum_{i=1}^3\frac{K_1}{8}(|\hat{y}_{i+1}-\hat{y}_i|-1)^2\\
+\frac{K_1}{8}(|\hat{y}_4-\hat{y}_1|-1)^2+\sum_{i=1}^2\frac{K_2}{4}(|\hat{y}_{i+2}-\hat{y}_i|-\sqrt{2})^2.
\end{gather*}
The auxiliary square $Q'(\xb_{\rm e}')=\xb_{\rm e}'+(-\frac{1}{2},\frac{1}{2})^2$ centred at $\xb_{\rm e}':=(\frac{3}{2},\frac{5}{2})$ is adjacent to two physically relevant cross-sectional squares, so in particular, the atoms with $\xb'$-coordinates $(2,2)$ and $(1,3)$, belonging to  different `real' atomic squares $Q'(\frac{1}{2},\frac{3}{2})$ and $Q'(\frac{3}{2},\frac{3}{2})$, can still interact -- this interaction should be comprised in $W_{\rm surf}(\mathfrak{t}_k(\xb_{\rm e}'),\cdot)$.} Like this, $E^{(k)}$ is expressible by \eqref{eq:EkU}. After adding a suitable penalty term positive in a neighbourhood of ${\rm O}(3)\bar{\Id}\setminus \bar{\rm SO}(3)$, such $W_{\rm cell}$, $W_{\rm surf}$, and $W_{\rm end}$ fulfil all the assumptions for our results to apply.
\end{example}

\begin{lemma}
\tc{Under the assumptions of Lemma \ref{equivNorms}, let $W$ be a full cell energy function. Then
\begin{align}\label{rig}
  k^3h_k^2\int_{\Q(\bar{\x})}\mathrm{dist}^2(\nabla_k\yk(\xi),\mathrm{SO}(3))\md \xi\leq CW\bigl(\bar{\nabla}\hat{y}^{(k)}(\hat{x})\bigr).
\end{align}}
\end{lemma}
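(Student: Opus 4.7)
The strategy splits into two parts: an algebraic coercivity inequality for $W$ relative to $\bar{\rm SO}(3)$, and a lifting of this pointwise bound to an integral estimate on $\tilde{Q}(\bar{\x})$ via Lemma \ref{equivNorms}.

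First I would establish the rigidity-type bound
$$W(\vec{y}) \;\ge\; c\, \mathrm{dist}^2\bigl(\vec{y}, \bar{\rm SO}(3)\bigr) \qquad \text{for all } \vec{y} \in V_0^\bot.$$
Since the columns of any $R\bar{\Id}$ sum to zero, $\bar{\rm SO}(3) \perp V_0$, and every discrete gradient $\bar{\nabla}\hat{y}^{(k)}(\hat{x})$ lies in $V_0^\bot$ by definition, making this restriction natural. I would split the argument by regime. Near $\bar{\rm SO}(3)$: by (E1) I reduce, via a polar decomposition, to $\vec{y}$ close to $\bar{\Id}$ and Taylor-expand via (E3). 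The excluded subspace $\mathrm{span}\{V_0 \cup \R_{\rm skew}^{3\times 3}\bar{\Id}\}$ is precisely the tangent space at $\bar{\Id}$ to the smooth manifold $\bar{\rm SO}(3) + V_0$ of zero-energy configurations, so positive-definiteness of $\nabla^2 W(\bar{\Id})$ on the orthogonal complement delivers a quadratic lower bound in the normal directions, where $\mathrm{dist}^2(\cdot, \bar{\rm SO}(3))$ likewise grows quadratically. Far from $\bar{\rm SO}(3)$: (E2) together with continuity and a compactness argument on bounded sets gives a uniform positive lower bound, while (E4) delivers quadratic growth of $W$ on $V_0^\bot$ at infinity; since $\mathrm{dist}(\vec{y}, \bar{\rm SO}(3)) \le |\vec{y}| + |\bar{\Id}|$, this transfers to the claimed estimate after patching.

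Second I would exploit Lemma \ref{equivNorms} via a rigid-motion correction. Let $R^* \in \mathrm{SO}(3)$ minimise $R \mapsto |\bar{\nabla}\hat{y}^{(k)}(\hat{x}) - R\bar{\Id}|^2$ and define
$$\tilde{\psi}^{(k)}(\xi) := \yk(\xi) - R^*(\xi_1, h_k \xi_2, h_k \xi_3)^\top \quad \text{on } \tilde{Q}(\bar{\x}).$$
A direct check shows that $\tilde{\psi}^{(k)}$ has the same piecewise-affine structure as $\yk$ (we subtract an affine map) and that $\nabla_k \tilde{\psi}^{(k)} = \nabla_k \yk - R^*$ and $\bar{\nabla}\hat{\psi}^{(k)}(\hat{x}) = \bar{\nabla}\hat{y}^{(k)}(\hat{x}) - R^*\bar{\Id}$, where $\hat{\psi}^{(k)}(\hat{x}) = \hat{y}^{(k)}(\hat{x}) - R^*\hat{x}$. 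Applying the upper bound of Lemma \ref{equivNorms} to $\tilde{\psi}^{(k)}$ together with the pointwise inequality $\mathrm{dist}^2(\nabla_k \yk, \mathrm{SO}(3)) \le |\nabla_k \yk - R^*|^2$ yields
$$k^3 h_k^2 \int_{\tilde{Q}(\bar{\x})} \mathrm{dist}^2(\nabla_k \yk, \mathrm{SO}(3))\,\md\xi \;\le\; C\,|\bar{\nabla}\hat{y}^{(k)}(\hat{x}) - R^*\bar{\Id}|^2 \;=\; C\,\mathrm{dist}^2\bigl(\bar{\nabla}\hat{y}^{(k)}(\hat{x}), \bar{\rm SO}(3)\bigr).$$
Combined with the rigidity bound above applied to $\vec{y} = \bar{\nabla}\hat{y}^{(k)}(\hat{x}) \in V_0^\bot$, this proves the claim.

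The main obstacle lies in the rigidity bound: one must verify that the degenerate directions of $\nabla^2 W(\bar{\Id})$ coincide with the tangent directions of the null set $\bar{\rm SO}(3) + V_0$, and then glue the local Taylor estimate to the far-field bound from (E4) through a compactness argument that genuinely exploits the restriction to $V_0^\bot$ (without which translations in $V_0$ would obstruct any control of the distance). Once this algebraic coercivity is established, the remaining integral estimate reduces essentially to a rigid translation of the deformation and a single application of Lemma \ref{equivNorms}.
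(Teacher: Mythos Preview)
Your proposal is correct and follows the standard route: the paper itself does not give a proof but simply cites \cite[Lemma~3.2]{BS06}, and your two-step argument (coercivity $W(\vec{y}) \ge c\,\mathrm{dist}^2(\vec{y},\bar{\rm SO}(3))$ on $V_0^\bot$ via (E1)--(E4), then transfer to the integral via Lemma~\ref{equivNorms} after subtracting a rigid motion) is precisely the argument found there. One small caution: in the intermediate regime your ``compactness argument'' tacitly assumes continuity of $W$ away from $\bar{\rm SO}(3)$, whereas (E3) only guarantees Borel measurability; in the literature this is usually handled either by stating (E4) as a global quadratic lower bound or by an additional mild regularity hypothesis, so be explicit about which route you take.
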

\begin{proof}See \cite[Lemma~3.2]{BS06}; the claim is only restated in our notation.
\end{proof}

\subsection{Compactness of low-energy sequences}

We provide a compactness theorem that complements our $\Gamma$-convergence results in the following sections and is also the first step towards their proofs. It is based on a now well-known result about geometric rigidity from \cite{FrM02} and is essentially contained in \cite{MMh4}. 

We fix a null sequence $(h_k)$ with $\frac{1}{k} \le h_k$ and abbreviate $h'_k=\frac{1}{k}\lfloor k h_k \rfloor$. Set $\Omega_k = (0,L_k) \times \frac{1}{kh_k} S_k$. 

\begin{thm}\label{MMcomp}
Let $(\ybk)_{k=1}^\infty$ be a sequence with $y^{(k)}\colon \Lambda^{\rm ext}_k \to \R^3$ such that their interpolations $\ybk$ constructed in Section~\ref{sec:inter} satisfy the estimate 
\begin{align}\label{eq:dist-SO}
\limsup_{k\goto\infty}\frac{1}{h_k^2}\int_{\Omega_k}\mathrm{dist}^2(\nabla_k\ybk,\mathrm{SO}(3))\md \xb<\infty.
\end{align}
Then, there exist a (not relabelled) subsequence $(\ybk)$ and a sequence of piecewise constant mappings $R^{(k)}\colon \R\goto {\rm SO}(3)$ whose discontinuity set is contained in $\{h'_k,2h'_k,\ldots, (\lfloor L_k/h'_k \rfloor-1) h'_k\}$ such that
\begin{align}
R^{(k)}\goto R\text{ in }L^2([0,L];\R^{3\times 3}),\label{eq:RkRU}
\end{align}
where $R\in\mathrm{SO}(3)$ a.e.\ and $R(x_1)=\Bigl(\frac{\partial \yb}{\partial \xb_1}(x)\,\big|\,d_2(x)\,\big|\,d_3(x)\Bigr)$ for $\y\in H^2(\O;\R^3)$, $d_2,d_3\in H^1(\O;\R^3)$ that are independent of $\x_2$ and $\x_3$. Moreover, we have
\begin{align}
\int_{\Omega^{\rm ext}_k}|\nabla_k\ybk-R^{(k)}|^2\md\xb \leq Ch_k^2.\label{eq:FrMU}
\end{align}
and
\begin{align}
|R^{(k)}(ih'_k+\tfrac{3}{2}h'_k)-R^{(k)}(ih'_k+\tfrac{1}{2}h'_k)|^2
&\leq \frac{C}{h_k}\lVert\mathrm{dist}(\nabla_k\ybk,{\rm SO}(3))\rVert_{L^2((u_i^{(k)},v_i^{(k)})\times S^{\rm ext}_k)}^2,\label{eq:ptCurv}
\end{align}
where $u_i^{(k)}=ih_k'$, $v_i^{(k)}=u_i^{(k)}+2h_k$ for $i=1,\dots,\lfloor L_k/h'_k \rfloor-3$, $u_0^{(k)}=-\frac{1}{k}$, $v_0^{(k)}=-\frac{1}{k}+3h_k$ and $u_i^{(k)}=\min\{(\lfloor L_k/h'_k \rfloor -2)h_k',L_k+\frac{1}{k}-3h_k\}$, $v_i^{(k)}=L_k+\frac{1}{k}$ for $i=\lfloor L_k/h'_k \rfloor-2$.
\end{thm}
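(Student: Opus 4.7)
The plan is to adapt the compactness argument of \cite{MMh4} to the present discrete setting, with the key tool being the Friesecke--James--Müller geometric rigidity estimate applied on cubically shaped subcylinders. The first step is to use Lemma \ref{lemma:ext-general} to pass from $\Omega_k$ to $\Omega^{\rm ext}_k$: the extension procedure ensures that $\limsup_k \frac{1}{h_k^2}\int_{\Omega_k^{\rm ext}} \mathrm{dist}^2(\nabla_k \ybk,\mathrm{SO}(3))\,\md\xb < \infty$ as well, so from now on everything can be done on the extended domain.

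Next, I would partition $(-\tfrac{1}{k}, L_k+\tfrac{1}{k})$ into the $\lfloor L_k/h'_k\rfloor$ intervals $I_i^{(k)}$ of length comparable to $h'_k$ (with $I_0^{(k)}$ and the last interval slightly adjusted to cover the boundary pieces of $\Omega^{\rm ext}_k$). Change variables to $\zeta=(\x_1,h_k\x_2,h_k\x_3)$, so that $\nabla_k\ybk(\xb)$ becomes the ordinary gradient of the pulled-back deformation on cylinders $I_i^{(k)}\times \tfrac{1}{k}S_k^{\rm ext}$, which have approximately cubic shape with side $\sim h_k$ (uniformly bounded aspect ratio, independent of $k$, by construction of $S_k$ as a fattening of the fixed Lipschitz domain $S$). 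The geometric rigidity theorem then produces rotations $R_i^{(k)}\in \mathrm{SO}(3)$ with
\[
\int_{I_i^{(k)}\times\frac{1}{kh_k}S_k^{\rm ext}} |\nabla_k\ybk-R_i^{(k)}|^2\,\md\xb \le C\int_{I_i^{(k)}\times\frac{1}{kh_k}S_k^{\rm ext}} \mathrm{dist}^2(\nabla_k\ybk,\mathrm{SO}(3))\,\md\xb,
\]
with $C$ depending only on $S$. Defining $R^{(k)}$ to equal $R_i^{(k)}$ on $I_i^{(k)}$ and summing over $i$ yields \eqref{eq:FrMU}.

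To obtain \eqref{eq:ptCurv}, I would apply the same rigidity estimate on the slightly enlarged cylinder $(u_i^{(k)},v_i^{(k)})\times \tfrac{1}{kh_k}S_k^{\rm ext}$ (still of cubic aspect ratio) to obtain an auxiliary rotation $\tilde R_i^{(k)}$, and then compare both $R_i^{(k)}$ and $R_{i+1}^{(k)}$ to $\tilde R_i^{(k)}$ by the triangle inequality and the fact that two $L^2$-close constants on a set of measure $\sim h_k$ must be $O(h_k^{-1/2})$-close pointwise. Summing the resulting bound \eqref{eq:ptCurv} over $i$ gives $\sum_i |R^{(k)}_{i+1}-R^{(k)}_i|^2 \le Ch_k$, so the piecewise affine interpolation $\widetilde R^{(k)}$ through the values $R^{(k)}_i$ satisfies $\int_0^L |\partial_{\x_1}\widetilde R^{(k)}|^2 \le C$. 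Thus $(\widetilde R^{(k)})$ is bounded in $H^1((0,L);\R^{3\times 3})$; extract a subsequence converging strongly in $L^2$ to some $R\in H^1$, and since $|\widetilde R^{(k)}-R^{(k)}|$ is controlled by the same increments, \eqref{eq:RkRU} follows. Closedness of $\mathrm{SO}(3)$ forces $R\in\mathrm{SO}(3)$ a.e.

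Finally, the structure of the limit: from \eqref{eq:FrMU}, the columns $h_k^{-1}\partial_{\x_2}\ybk$ and $h_k^{-1}\partial_{\x_3}\ybk$ are $L^2$-close to the uniformly bounded $R^{(k)}_{\bullet 2},R^{(k)}_{\bullet 3}$, so $\partial_{\x_2}\ybk,\partial_{\x_3}\ybk\to 0$ in $L^2$ and any $H^1$-limit $\tilde y$ (after subtracting a mean to gain $L^2$-compactness via Poincaré--Wirtinger) depends only on $\x_1$. Passing to the limit in $\partial_{\x_1}\ybk\to R_{\bullet 1}$ identifies $\partial_{\x_1}\tilde y = R_{\bullet 1} \in H^1$, so $\tilde y\in H^2$; setting $d_2:=R_{\bullet 2}$ and $d_3:=R_{\bullet 3}$ completes the identification. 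The main technical obstacle is step two: ensuring that the constant in the rigidity estimate is uniform in $k$ despite the $k$-dependent transverse domain $\tfrac{1}{k}S_k^{\rm ext}$. This is handled by the hypothesis that $S_k$ is the largest connected lattice approximation of $kh_k S$ for a fixed bounded Lipschitz $S$, which guarantees a uniform John-domain (hence Korn/rigidity) constant for the rescaled cylinders.
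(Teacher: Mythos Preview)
Your approach is essentially the same as the paper's: both reduce to the argument of \cite{MMh4} via geometric rigidity on subcylinders of aspect ratio $\sim 1$, after invoking Lemma~\ref{lemma:ext-general} to pass to the extended domain. The paper's proof is in fact just a pointer to \cite{MMh4}, adding only the observation that the specific partition points $ih_k'$ (with $b=1$ in the interior and $b=3$ near the ends) pin down the announced discontinuity set of $R^{(k)}$; your write-up simply unpacks that reference.

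One small difference worth noting: you apply rigidity on the $k$-dependent cross section $\tfrac{1}{kh_k}S_k^{\rm ext}$ and then argue that the constant is uniform via a John-domain property. The paper sidesteps this by observing that, since $m$ is chosen so that $S_k^{\rm ext}\supset kh_kS$, the extended domain contains $(-\tfrac{1}{k},L_k+\tfrac{1}{k})\times S$, and the distance estimate \eqref{eq:dist-SO} transfers to this \emph{fixed} cross section; rigidity is then applied on $(a,a+bh_k)\times h_kS$, whose shape is independent of $k$ after scaling, so uniformity of the constant is immediate. This is a cleaner route than verifying uniform John constants for the lattice approximations, though your version is also workable.
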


Note that in the ultrathin case one has $h_k = 1/k$ and so $h'_k=\frac{1}{k}$, $(\lfloor L_k/h'_k \rfloor-1) h'_k = L_k-\frac1k$.  

\begin{proof}
By Lemma~\ref{lemma:ext-general}, property \eqref{eq:dist-SO} is equivalent to 
\begin{align*}
\limsup_{k\goto\infty}\frac{1}{h_k^2}\int_{\Omega^{\rm ext}_k}\mathrm{dist}^2(\nabla_k\ybk,\mathrm{SO}(3))\md \xb<\infty,
\end{align*}
hence also to the same inequality with $\Omega_k^{\rm ext}$ replaced by $(-\frac{1}{k},L_k+\frac{1}{k}) \times S$ or $(0,L)\times S$. 
Except for the specific choice of the discontinuity set, these statements are thus proven in \cite{MMh4} by applying the geometric rigidity theorem of \cite{FrM02} to sets of the form $(a, a + bh_k) \times h_kS$.\footnote{Because of lattice squares that only share one corner, in the ultrathin case $S_k$ might not be Lipschitz, but as a finite union of squares the domain is still very regular so that all necessary claims hold. Thus we can choose $S=S_k$ for ultrathin rods.} If we do this here for $b = 1$ and the special choices $a=ih_k'$, $i=1,\ldots,\lfloor L_k/h'_k \rfloor-2$ as well as $b=3$ and $a\in\{-\frac{1}{k},L_k+\frac{1}{k}-3h_k\}$, we see that $R^{(k)}$ can be arranged to jump only in $\{h'_k,2h'_k,\ldots, (\lfloor L_k/h'_k \rfloor-1) h'_k\}$.
\end{proof}

We remark that, for a suitable choice of translation vectors $c_k$ (which does not change the energy),  $\ybk-c_k \goto \yb$ in $H^1(\O;\R^3)$.

\section{Resulting theory for ultrathin rods}\label{sec:U}

We now specialize to ultrathin rods for which the cross sectional lattice $\mathcal{L}_k = \mathcal{L}$ is assumed to be fixed. We set $h_k = 1/k$ and fix $m=1$. Since also $\mathcal{L}_k^{\rm ext}$, $\mathcal{L}'^{,\rm ext}_k$, $S_k$ (which equals $S$ without loss of generality) and $S^{\rm ext}_k$ are independent of $k$, we drop the subscript.

\subsection{\tc{Difference operators}}\label{sec:diff-ops}
In addition to $\bar{\nabla}$, we define several other difference operators, applicable to any $f\colon [-\frac{1}{k},L_k+\frac{1}{k}]\times\mathcal{L}^{\rm ext}\goto\R^\ell$, $\ell\in\N$. If $x\in \overline{\Omega^{\rm ext}_k}$, we denote by $\bar{x}$ an element of $\Lkexcb$ that is closest to $x$. For $x\in\overline{\Omega^{\rm ext}_k}$ we set 
\begin{align*}
\dk2d f(\xb)
&=k\biggl[f(\xb_1,(\bar{\xb}+\zfb^i)')-\frac{1}{4}\sum_{j=1}^4 f(\xb_1,(\bar{\xb}+\zfb^j)')\biggr]_{i=1}^4,\\
\d2d f(\xb)
&=\biggl[f(\xb_1,(\bar{\xb}+\zf^i)')-\frac{1}{4}\sum_{j=1}^4 f(\xb_1,(\bar{\xb}+\zf^j)')\biggr]_{i=1}^4,\\
\dgradk f(\xb)
&=k\biggl[f(\bar{\xb}+\zfb^i)-\frac{1}{8}\sum_{j=1}^8 f(\bar{\xb}+\zfb^j)\biggr]_{i=1}^8\\
&=\bigl(\dleft f(\xb)|\dright f(\xb)\bigr),\qquad
\dleft f(\xb),\;\dright f(\xb)\in\R^{3\times 4},\\
\Delta_1 f(\xb)&=k\biggl[\frac{1}{4}\sum_{j=5}^8 f(\bar{\xb}+\zfb^j)-\frac{1}{4}\sum_{j=1}^4 f(\bar{\xb}+\zfb^j)\biggr],
\end{align*}
whose interpretations are `2D-differences in the $\xb_2\xb_3$-plane' (divided by $1/k$ or not), `3D-differences' and `averaged difference in the $\xb_1$-direction', respectively. Note that the functions $\dk2d f(\xb_1,\cdot)$ and $\d2d f(\xb_1,\cdot)$ are piecewise constant on lattice squares of the form $\xb'+(-\frac{1}{2},\frac{1}{2})^2$, where $\xb'\in\mathcal{L}'$, and $\d2d f(\xb)$ is independent of $k$. The functions $\dgradk f$ and $\Delta_1 f$ are piecewise constant on lattice blocks that are centred in points of $\Lkexcb$.

Set $\yb_i^{(k)}=\ybk(\bar{\xb}+\zfb^i)$, $i=1,2,\dots,8$, then property \eqref{eq:surfVolMean} yields
\begin{align}\label{eq:avgDiff1}
\begin{split}
\Delta_1\ybk(\xb)
&=k\bigg(\frac{\yb_5^{(k)}+\yb_6^{(k)}+\yb_7^{(k)}+\yb_8^{(k)}}{4}-\frac{\yb_1^{(k)}+\yb_2^{(k)}+\yb_3^{(k)}+\yb_4^{(k)}}{4}\bigg)\\
&=k\dashint_{\bar{\xb}'+(-\frac{1}{2},\frac{1}{2})^2}\ybk(\bar{\xb}_1+\tfrac{1}{2k},\xi')-\ybk(\bar{\xb}_1-\tfrac{1}{2k},\xi')\md\xi'\\
&=k\dashint_{\bar{\xb}'+(-\frac{1}{2},\frac{1}{2})^2}\int_{\bar{\xb}_1-\frac{1}{2k}}^{\bar{\xb}_1+\frac{1}{2k}}\frac{\partial\ybk}{\partial\xb_1}(\xi_1,\xi')\md\xi_1\md\xi'=\dashint_{\Qb(\xb)}\frac{\partial\ybk}{\partial\xb_1}\md\xi.
\end{split}
\end{align}

Direct computation shows:
\begin{align*}
\dleft f(\xb)
&=\dk2d f(\bar{\xb}_1-\tfrac{1}{2k}, \xb')-\frac{1}{2}\Delta_1 f(\xb)(1,1,1,1),\\
\dright f(\xb)
&=\tc{\dk2d f(\bar{\xb}_1+\tfrac{1}{2k}, \xb')+\frac{1}{2}\Delta_1 f(\xb)(1,1,1,1)}
\end{align*}
and so, with all columns grouped together,
\begin{equation}\label{eq:splitGrad}
\tc{\dgradk f(\xb)
=\bigl(\dk2d f(\bar{\xb}_1-\tfrac{1}{2k}, \xb')|\dk2d f(\bar{\xb}_1+\tfrac{1}{2k}, \xb')\bigr)+\frac{1}{2}\Delta_1 f(\xb)(-1,-1,-1,-1,1,1,1,1).}
\end{equation}

\subsection{Gamma-convergence}\label{sec:GammaU}

Recall that $\O=(0,L)\times S$. In order to specify an appropriate limit space we first note that in view of Theorem~\ref{MMcomp} and \eqref{rig} it suffices to consider limiting configurations $\yb\in H^1(\Omega;\R^3)$ and $d_2,d_3 \in L^2(\Omega;\R^3)$ that do not depend on $(x_2,x_3)$. We will then simply write $\yb\in H^1((0,L);\R^3)$ and $d_2,d_3 \in L^2((0,L);\R^3)$. The following observation shows that the convergence in $L^2(\O,\R^3)$ to such $\yb$ and $d_2,d_3$ is naturally described in terms of asymptotic atomic positions and independent of our interpolation scheme, cf.\ also Remark~\ref{rmk:gen-lim}. 

For a sequence $(y^{(k)})_{k=1}^\infty$ of (extended) lattice deformations and $\yb\in H^1((0,L);\R^3)$ the convergence $\ybk \to \yb$ in $L^2(\Omega;\R^3)$ is equivalent to 
$$ \ybk(\cdot,x') \to \yb 
   \text{ in } L^2((0,L);\R^3) 
   \text{ for every } x' \in \tc{\mathcal{L}}.$$
We note here that for $x' \in \tc{\mathcal{L}}$ the map $\yk(\cdot,x')$ is nothing but the piecewise affine interpolation of $\yk(\cdot,x')$ on $\{-\frac{1}{k}, 0, \ldots, L_k+\frac{1}{k}\}$. If moreover $d_2, d_3 \in L^2((0,L);\R^3)$, then \tc{$\nabla_k \ybk \stackrel{L^2}{\to} R=(\frac{\partial \yb}{\partial \xb_1}\,|\,d_2\,|\,d_3)$} is equivalent to 
$$ \tc{\bar{\nabla}_k\ybk\goto R\,\bar{\Id}\text{ in }L^2(\O;\R^{3\times 8})} $$
\tc{by Lemma \ref{equivNorms} (recall that $\bar{\nabla}_k\ybk$ is a function in $L^2(\O;\R^{3\times 8})$ constant on each cell $\bar{x}+(-\frac{1}{2k},\frac{1}{2k})\times(-\frac{1}{2},\frac{1}{2})^2$, $\bar{x} \in \Lkexcb$).}

\begin{thm}\label{GammaU}
If $k\goto\infty$, the functionals $kE^{(k)}$ $\Gamma$-converge to the functional $E_{\rm ult}$ defined below, in the following sense:
\begin{enumerate}
\item[(i)] \upshape{(liminf inequality)} \itshape
Let $(y^{(k)})_{k=1}^\infty$  be a sequence of (extended) lattice deformations such that their piecewise affine interpolations $(\ybk)_{k=1}^\infty$, defined in Section \ref{sec:not}, converge to $\yb\in H^1((0,L);\R^3)$ in $L^2(\O;\R^3)$. Let us also assume that $k\pl_{\xb_s}\ybk\goto d_s\in L^2((0,L);\R^3)$ in $L^2(\O;\R^3)$, $s=2,3$. Then
$$E_{\rm ult}(\yb,d_2,d_3)\leq \liminf_{k\goto\infty} kE^{(k)}(y^{(k)}).$$
\item[(ii)] \upshape{(existence of a recovery sequence)} \itshape For every $\yb\in H^1((0,L);\R^3)$, $d_2,d_3\in L^2((0,L);\R^3)$ there is a sequence of (extended) lattice deformations $(y^{(k)})_{k=1}^\infty$ such that their interpolations $(\ybk)$, defined in Section \ref{sec:not}, satisfy $\ybk\goto\yb$ in $L^2(\O;\R^3)$, $k\frac{\pl\ybk}{\pl\xb_s}\goto d_s$ in $L^2(\O;\R^3)$ for $s=2,3$, and
$$\lim_{k\goto\infty} kE^{(k)}(y^{(k)})=E_{\rm ult}(\yb,d_2,d_3). $$
\end{enumerate}
The limit energy functional is given by
\begin{equation*}
E_{\rm ult}(\yb,d_2,d_3)=\begin{cases}
\tc{\frac{1}{2}}\int_0^L Q_{\rm cell}^{\rm rel}(\T{R}\pl_{\xb_1}R)\md\xb_1 & \text{if } (\yb,d_2,d_3)\in\calA,\\
+\infty & \text{otherwise},
\end{cases}
\end{equation*}
where $R:=(\pl_{\xb_1}\yb|d_2|d_3)$ and the class of admissible deformations is
\begin{multline*}
\calA:=\bigl\{(\yb,d_2,d_3)\in H^2(\O;\R^3)\times H^1(\O;\R^3)\times H^1(\O;\R^3);\\
\yb,d_2,d_3\textit{ do not depend on }\xb_2,\xb_3,\quad \bigl(\tfrac{\pl\yb}{\pl\xb_1}\,|\,d_2\,|\, d_3\bigr)\in{\rm SO}(3)\text{ a.e.\ in }(0,L)\bigr\}.
\end{multline*}
The relaxed quadratic form $Q_{\rm cell}^{\rm rel}\colon\R^{3\times 3}_{\rm skew}\goto[0,+\infty)$ is defined as
\begin{multline}\label{eq:minProbU}
Q_{\rm cell}^{\rm rel}(A):=\min_{\substack{\a\colon\mathcal{L}^{\rm ext}\goto\R^3\\g\in\R^3}}\sum_{\xb'\in\mathcal{L}'^{,\rm ext}} Q_{\rm tot}\biggl(\xb',\frac{1}{2}
\bigl(A\T{(0,x')}+g\bigr)(-1,-1,-1,-1,1,1,1,1)\\
+\frac{1}{4}A{\sst\begin{pmatrix}[0.7]
\sst 0&
\sst 0&
\sst 0&
\sst 0&
\sst 0&
\sst 0&
\sst 0&
\sst 0\\
\sst 1&
\sst 1&
\sst -1&
\sst -1&
\sst -1&
\sst -1&
\sst 1&
\sst 1\\
\sst 1&
\sst -1&
\sst -1&
\sst 1&
\sst -1&
\sst 1&
\sst 1&
\sst -1
\end{pmatrix}}
+\bigr(\d2d\a|\d2d\a\bigr)\biggr)
\end{multline}
with $Q_{\rm tot}(\xb',\cdot)=Q_{\rm cell}$ if $x'\in \mathcal{L}'$ and $Q_{\rm tot}(\xb',\cdot)=Q_{\rm surf}(\mathfrak{t}({\xb'}),\cdot)$ if $x'\in \mathcal{L}'^{,\rm ext} \setminus \mathcal{L}'$.
\end{thm}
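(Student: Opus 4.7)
The scaling $k=\e_k^3h_k^{-4}$ (with $\e_k=h_k=1/k$) and the cell bound \eqref{rig} reduce the hypothesis $\limsup_{k\to\infty}kE^{(k)}(y^{(k)})<\infty$ to the distance-to-$\mathrm{SO}(3)$ bound of Theorem~\ref{MMcomp}, which thus applies. The plan is to follow the by-now standard template for discrete-to-continuum $\Gamma$-limits in the bending scaling (cf.\ \cite{BS06,BrS19}): compactness $\Rightarrow$ weak convergence of a rescaled discrete strain $\Rightarrow$ pointwise relaxation over a free cross-sectional warping. The novelty is that in the ultrathin regime this warping lives on the finite lattice $\mathcal{L}^{\rm ext}$, which both produces the discrete minimization formula \eqref{eq:minProbU} and causes the surface energies $W_{\rm surf}$ to survive in the limit.

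\emph{Liminf inequality.} Theorem~\ref{MMcomp} supplies piecewise constant $R^{(k)}\to R=(\pl_{\xb_1}\yb|d_2|d_3)$ in $L^2((0,L);\mathrm{SO}(3))$. I introduce the rescaled strain
$$G^{(k)}(\xb):=k\bigl[(R^{(k)}(\xb_1))^{\top}\dgradk\ybk(\xb)-\bar{\Id}\bigr]\in L^2(\O;\R^{3\times 8}),$$
which by \eqref{eq:FrMU} and Lemma~\ref{equivNorms} is $L^2$-bounded and hence, along a subsequence, converges weakly to some $G$. The heart of the argument is to identify the structure of $G$: the decomposition \eqref{eq:splitGrad} splits $\dgradk\ybk$ into the $\xb'$-dependent 2D differences and the longitudinal average $\Delta_1\ybk=\dashint_{\Qb}\pl_{\xb_1}\ybk$ (by \eqref{eq:avgDiff1}), while the curvature-jump estimate \eqref{eq:ptCurv} ensures that the skew tensor $A:=R^{\top}\pl_{\xb_1}R$ depends on $\xb_1$ alone. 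A cell-by-cell analysis then matches $G(\xb_1,\xb')$ precisely to the expression appearing inside $Q_{\rm tot}(\xb',\cdot)$ in \eqref{eq:minProbU}, with $\a\colon(0,L)\times\mathcal{L}^{\rm ext}\to\R^3$ a cross-sectional warping and $g\in L^2((0,L);\R^3)$ a longitudinal shift. A truncation/Taylor-expansion argument around $\bar{\Id}$ (as in \cite[Lemma~3.3]{BS06}) replaces $W_{\rm cell}$ and $W_{\rm surf}(\mathfrak{t},\cdot)$ by $\tfrac{1}{2}Q_{\rm cell}$ and $\tfrac{1}{2}Q_{\rm surf}(\mathfrak{t},\cdot)$ up to $o(1)$ errors; weak-$L^2$ lower semicontinuity of the nonnegative quadratic forms together with pointwise minimization in $(\a,g)$ at each $\xb_1$ then yields $\liminf_{k\to\infty}kE^{(k)}(y^{(k)})\ge\tfrac{1}{2}\int_0^L Q_{\rm cell}^{\rm rel}(A(\xb_1))\md\xb_1$. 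The endpoint contribution $W_{\rm end}$ sums over $O(1)$ cells and vanishes after the scaling by $k$.

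\emph{Recovery sequence.} A density argument reduces the problem to $R\in C^2([0,L];\mathrm{SO}(3))$; mollifying a measurable selection of minimizers of \eqref{eq:minProbU} at $A(\xb_1)=R^{\top}\pl_{\xb_1}R$ yields smooth $\xb_1$-dependent $\a(\xb_1,\cdot),g(\xb_1)$. I then prescribe lattice values of the schematic form
$$y^{(k)}(\xb_1,\tfrac{1}{k}\xb')=\yb(\xb_1)+\tfrac{1}{k}R(\xb_1)\T{(0,\xb')}+\tfrac{1}{k^2}R(\xb_1)\,\a(\xb_1,\xb')+\text{(longitudinal term encoding $g$)},$$
extending via Lemma~\ref{lemma:ext-general}. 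A direct Taylor expansion together with the cellwise optimality of $(\a,g)$ produces the matching upper bound $kE^{(k)}(y^{(k)})\to E_{\rm ult}(\yb,d_2,d_3)$, while $\ybk\to\yb$ and $k\pl_{\xb_s}\ybk\to d_s$ in $L^2$ follow from the regularity of $\yb$ and $R$ by construction. The main obstacle is the rigorous cell-by-cell identification of $G$ -- separating the rigid $A$-part, the discrete warping component $(\d2d\a|\d2d\a)$, and the $g$-contribution, and proving the $\xb'$-independence of $A$; once this structural decomposition is in place, the Taylor expansion and the relaxation step are fairly routine.
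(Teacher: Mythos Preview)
Your outline is correct and follows the paper's approach: compactness via Theorem~\ref{MMcomp}, weak limit of the rescaled discrete strain $\bar G^{(k)}$, structural identification of $\bar G$, then Taylor expansion plus weak lower semicontinuity for the liminf, and an explicit ansatz plus density for the limsup. The one place where you gloss over a nontrivial idea is the ``cell-by-cell analysis'' that matches $\bar G$ to the argument of $Q_{\rm tot}$ in \eqref{eq:minProbU}. The paper's Proposition~\ref{Gbar} does this by using \eqref{eq:splitGrad} to split $\dgradk\ybk$ into a longitudinal part and two \emph{distinct} transversal 2D-difference parts evaluated at the slices $\bar x_1\pm\tfrac{1}{2k}$; these produce separate warpings $\alpha_{\rm left}$ and $\alpha_{\rm right}$, and it is their \emph{difference}---computed via $k(R^{(k)}(\cdot-\tfrac{1}{k})-R^{(k)})\rightharpoonup -\partial_{x_1}R$ together with the strong convergence $\dk2d\ybk\to R\,(\ldots)$---that yields the ultrathin correction $\mathfrak{C}$. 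Only after setting $\alpha=\tfrac{1}{2}(\alpha_{\rm left}+\alpha_{\rm right})$ does the limit take the symmetric form $(\d2d\alpha\,|\,\d2d\alpha)$ you write down, so $\mathfrak{C}$ is not an automatic byproduct of the longitudinal part but emerges from the left/right asymmetry. This is the mechanism your sketch does not supply, and without it the structural decomposition you list would be incomplete. (A minor point: the ``$x'$-independence of $A$'' is not an issue, since $A=R^\top\partial_{x_1}R$ with $R$ coming from Theorem~\ref{MMcomp} depends on $x_1$ only by construction.)
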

\begin{rem}
In comparison with the rod theory in \cite{MMh4}, the functional $E_{\rm ult}$ takes into account the fewer degrees of freedom of the cross section leading to the discrete minimization in \eqref{eq:minProbU} and it also features an \textit{ultrathin correction} term $\mathfrak{C}$, explicitly given in \eqref{eq:thinCorr} below, which captures effects in our very thin atomic structures that could not be described by a Cauchy--Born continuum approximation.
\end{rem}
\begin{rem}

Let us comment on the existence of a minimizer in \eqref{eq:minProbU}. Fix $A\in\R^{3\times 3}_{\rm skew}$ and let
$$C_A(\xb')=\frac{1}{2}A\T{(0,x')}(-1,-1,-1,-1,1,1,1,1)+\frac{1}{4}A{\sst\begin{pmatrix}[0.7]
\sst 0&
\sst 0&
\sst 0&
\sst 0&
\sst 0&
\sst 0&
\sst 0&
\sst 0\\
\sst 1&
\sst 1&
\sst -1&
\sst -1&
\sst -1&
\sst -1&
\sst 1&
\sst 1\\
\sst 1&
\sst -1&
\sst -1&
\sst 1&
\sst -1&
\sst 1&
\sst 1&
\sst -1
\end{pmatrix}}.$$
The mapping $J\colon (\R^3)^{\mathcal{L}^{\rm ext}}\times\R^3\goto\R$ given by
\begin{equation*}
J(\a,g)=\sum_{\xb'\in\mathcal{L}'^{,\rm ext}}Q_{\rm tot}\bigl(\xb',C_A(\xb')+\bigl(\d2d\a(\xb')|\d2d\a(\xb')\bigr)+\tfrac{1}{2}(-g|\cdots|-g|g|\cdots|g)\bigr)
\end{equation*}
is, in fact, a real-valued function of $3\cdot\sharp\mathcal{L}^{\rm ext}+3$ variables. Since $Q_{\rm tot}(\xb',\cdot)$ is positive semidefinite on $\R^{3\times 8}$ for any $\xb'\in\mathcal{L}'^{,\rm ext}$, the function $J$ is a positive semidefinite quadratic form. It thus attains a minimum and a minimizer $(\a,g)$ of $J$ can be chosen in linear dependence on $A$, so $Q_{\rm cell}^{\rm rel}$ is a quadratic form as well. Besides, since the components of $A:=\T{R}\pl_{\xb_1}R$ are $L^2$ in $\xb_1$, we obtain $(\a,g)\in L^2([0,L];(\R^3)^{\mathcal{L}^{\rm ext}}\times\R^3)$.
\end{rem}
\begin{rem}\label{rmk:gen-lim} One could also consider limiting configurations with an explicit dependence on $x'$. Due to the discrete nature of $\mathcal{L}$, however, only a subspace of $H^1(\O;\R^3)$ can be realized as limits of interpolated deformations $\ybk$. \tc{That is, $\ybk$ can converge to $\yb$} in $L^2(\Omega;\R^3)$  if and only if $\yb$ is piecewise affine in $x'$, more precisely, if for a.\ e.\ $x_1\in (0,L)$ and $x' \in \mathcal{L}'$ one has $\tilde{y}(x_1,x') = \frac{1}{4} \sum_{i=1}^4 \tilde{y}(x_1,x'+(\zf^i)')$ and $\tilde{y}(x_1, \cdot)$ is affine on $\mathrm{conv}\{x',x'+(\zf^i)',x'+(\zf^j)'\}$ if $i,j\in\{1,2,3,4\}$, $|i-j|=1$. Similarly, limiting directors $(d_2,d_3)$ are restricted to be gradients with respect to $x'$ of such functions. By Theorem~\ref{MMcomp}, \eqref{eq:EkU} and \eqref{rig} one still has $\Gamma$-convergence with such a class of limiting configurations if $E_{\rm ult}$ is extended by the value $+\infty$  outside of $H^1((0,L);\R^3)\times L^2((0,L);\R^3) \times L^2((0,L);\R^3)$.
\end{rem}
\begin{rem}
\tc{Standard arguments show that forcing terms of the form $-k^{-3}h_k^{-2}\sum_{x\in\Lambda_k}f(x_1)\cdot y^{(k)}(x)$, $f\in L^2((0,L);\R^3)$, could be added to $k^{-3}h_k^{-4}E^{(k)}$ and $\Gamma$-convergence as well as compactness claims would still hold (see e.g. \cite[Corollary~3.4]{BS07} or \cite{BrS19} for more details).}
\end{rem}

\subsection{Proof of the lower bound}
In this section, we prove Theorem \ref{GammaU}(i). We may assume that $kE^{(k)}(y^{(k)}) \le C$ and so \eqref{eq:dist-SO} holds true by \eqref{eq:EkU} and \eqref{rig}.
 We set $\Os=(0,L)\times S^{\rm ext}$. Let $R^{(k)}$ be as in  Theorem \ref{MMcomp}. By \eqref{eq:FrMU} and in analogy with \cite{MMh4}, for
\begin{equation*}
\Gk(\xb):=\frac{\T{(R^{(k)})}(\xb_1)\nabla_k\ybk(\xb)-\Id}{1/k},\quad \xb\in\Omega^{\rm ext}_k,
\end{equation*}
we have $\Gk\weakto G\in L^2(\Os;\R^{3\times 3})$ in $L^2(\Os;\R^{3\times 3})$, up to a subsequence. In our discrete setting we instead need to study
\begin{equation*}
\bGk(\xb):=\frac{\T{(R^{(k)})}(\xb_1)\dgradk \ybk-\bar{\Id}}{1/k},\quad \xb\in\Omega^{\rm ext}_k.
\end{equation*}
The $L^2$-boundedness of $\{\Gk\}$ implies the boundedness of $\{\bGk\}$ in $L^2(\Os;\R^{3\times 8})$ by \eqref{eq:norms}. Hence $\bGk\weakto\bar{G}$ for a subsequence, which we do not relabel. 
 We state a proposition about the structure of $\bar{G}$. 

\begin{prop}\label{Gbar}
$\bGk\weakto\bar{G}$ in $L^2(\Os;\R^{3\times 8})$ for 
\begin{align*}
\bar{G}(\xb)
&=\frac{1}{2}\Bigl[G_1(\xb_1)+\T{R}(\xb_1)\frac{\pl R}{\pl\xb_1}(\xb_1)\T{(0,\bar{x}')}\Bigr](-1,-1,-1,-1,1,1,1,1)\\
&\quad +\mathfrak{C}(\xb_1)+\bigl(\d2d\a(\xb)|\d2d\a(\xb)\bigr),
\end{align*}
where $G_1 \in L^2((0,L);\R^3)$, $\alpha\in L^2((0,L)\times\mathcal{L}^{\rm ext};\R^3) \cong L^2((0,L);(\R^3)^{\mathcal{L}^{\rm ext}})$ and $\mathfrak{C}$ is explicitly given by 
\begin{align}
\mathfrak{C}
&=\frac{1}{4}\begin{pmatrix}[0.8]
\sst -\kappa_2-\kappa_3&
\sst \kappa_3-\kappa_2&
\sst \kappa_2+\kappa_3&
\sst \kappa_2-\kappa_3&
\sst \kappa_2+\kappa_3&
\sst \kappa_2-\kappa_3&
\sst -\kappa_2-\kappa_3&
\sst \kappa_3-\kappa_2\\
\sst -\tau&
\sst \tau&
\sst \tau&
\sst -\tau&
\sst \tau&
\sst -\tau&
\sst -\tau&
\sst \tau\\
\sst \tau&
\sst \tau&
\sst -\tau&
\sst -\tau&
\sst -\tau&
\sst -\tau&
\sst \tau&
\sst \tau
\end{pmatrix}\label{eq:thinCorr}
\end{align}
with $\kappa_2(\xb_1)=\frac{\pl^2\yb}{\pl\xb_1^2}\cdot d_2$, $\kappa_3(\xb_1)=\frac{\pl^2\yb}{\pl\xb_1^2}\cdot d_3$, $\tau(\xb_1)=\frac{\pl d_2}{\pl \xb_1}\cdot d_3$, and $R$ from Theorem \ref{GammaU}. 
\end{prop}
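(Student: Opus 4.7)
The plan is to first establish weak $L^2$-compactness of $\bGk$, then split $\bGk$ via \eqref{eq:splitGrad} into a longitudinal (first-variable) and a transverse (cross-sectional) part, and identify their weak limits separately. By Lemma~\ref{equivNorms} combined with \eqref{eq:FrMU}, $\|\bGk\|_{L^2(\Os;\R^{3\times 8})}\le C$, so $\bGk\weakto\bar G$ along a subsequence in $L^2(\Os;\R^{3\times 8})$. Writing $\bar{\Id}=(Z|Z)+\tfrac{1}{2}e_1(-1,-1,-1,-1,1,1,1,1)$ with $Z\in\R^{3\times 4}$ the 2D block (columns $(0,(\zf^i)')^T$, $i=1,\ldots,4$), \eqref{eq:splitGrad} gives $\bGk=L^{(k)}+T^{(k)}$, where $L^{(k)}:=\tfrac{k}{2}(\T{(R^{(k)})}\Delta_1\ybk-e_1)(-1,-1,-1,-1,1,1,1,1)$ and $T^{(k)}:=k\T{(R^{(k)})}(\dk2d\ybk(\bar x_1-\tfrac{1}{2k},\cdot)\,|\,\dk2d\ybk(\bar x_1+\tfrac{1}{2k},\cdot))-k(Z|Z)$.

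For $L^{(k)}$, I would apply \eqref{eq:avgDiff1} and introduce the classical matrix $\Gk:=k(\T{(R^{(k)})}\nabla_k\ybk-\Id)$ of \cite{MMh4} to obtain $L^{(k)}=\tfrac{1}{2}(\dashint_{\Qb(x)}(\Gk)_{\bullet 1}\md\xi)(-1,-1,-1,-1,1,1,1,1)$. The continuum rod analysis of \cite{MMh4} identifies the first column of the weak $L^2$-limit $G$ of $\Gk$ as $(G)_{\bullet 1}(x)=G_1(x_1)+\T{R(x_1)}\pl_{x_1}R(x_1)\T{(0,x')}$ for some $G_1\in L^2((0,L);\R^3)$. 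Since $\bar x'\to x'$ uniformly across each cell as $k\to\infty$, passing to the weak limit produces the first summand of $\bar G$.

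For $T^{(k)}$, a Taylor expansion at $\bar x_1$ is central. Since $\ybk$ is piecewise affine and $k\pl_{x_s}\ybk\to d_s$, at each fixed $x_1$ one has $\dk2d\ybk(x_1,\cdot)=(d_2(x_1)|d_3(x_1))Z'+\d2d\tilde\alpha^{(k)}(x_1,\cdot)+o(1)$ in $L^2$, with $Z':=[(\zf^i)']_{i=1}^4\in\R^{2\times 4}$ and $\tilde\alpha^{(k)}$ a discrete warping corrector. The leading matrix satisfies $\T{R^{(k)}}(d_2|d_3)Z'\to Z$, so the shifts $\bar x_1\mp\tfrac{1}{2k}$ generate the dominant correction $\mp\tfrac{1}{2k}(\pl_{x_1}d_2|\pl_{x_1}d_3)Z'$; multiplying by $k\T{R}$ gives $\mp\tfrac{1}{2}((\T{R}\pl_{x_1}R)_{\bullet 2}|(\T{R}\pl_{x_1}R)_{\bullet 3})Z'$. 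Substituting $(\T{R}\pl_{x_1}R)_{\bullet 2}=(-\kappa_2,0,\tau)^T$ and $(\T{R}\pl_{x_1}R)_{\bullet 3}=(-\kappa_3,-\tau,0)^T$ and computing column-by-column reproduces precisely \eqref{eq:thinCorr}, with the sign pattern $\mathfrak{C}_{\bullet i+4}=-\mathfrak{C}_{\bullet i}$ reflecting the opposite shift directions. Setting $\alpha:=\T{R}\tilde\alpha$ (where $\tilde\alpha$ is the weak $L^2$-limit of $\tilde\alpha^{(k)}$) and using that $\d2d$ commutes with the $x_1$-dependent $\T{R}$ yields the $(\d2d\alpha|\d2d\alpha)$ contribution.

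The most delicate step is the separation in $T^{(k)}$ of the deterministic continuum correction $\mathfrak{C}$ from the residual warping $\d2d\alpha$, together with the verification $\alpha\in L^2((0,L);(\R^3)^{\mathcal{L}^{\rm ext}})$. The Taylor expansion of $d_s(\bar x_1\pm\tfrac{1}{2k})$ must be performed in the weak $L^2$-sense (justified by $d_s\in H^1$), while the piecewise-affine structure of $\ybk$ on 2D faces must be used to extract $\tilde\alpha^{(k)}$ and ensure its $L^2$-boundedness. These computations parallel those in \cite{MMh4} but must be adapted to the discrete cross-sectional lattice $\mathcal{L}$.
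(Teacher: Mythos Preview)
Your longitudinal analysis and the overall splitting via \eqref{eq:splitGrad} match the paper. The transversal part, however, has a genuine gap. You propose to write $\dk2d\ybk(x_1,\cdot)=(d_2(x_1)|d_3(x_1))Z'+\d2d\tilde\alpha^{(k)}+o(1)$ and then Taylor-expand $d_s(\bar x_1\mp\tfrac{1}{2k})$ to isolate $\mathfrak{C}$. This cannot be made rigorous: $(\ybk)$ is a \emph{generic} low-energy sequence, so there is no identity tying $\dk2d\ybk(\bar x_1-\tfrac{1}{2k},\cdot)$ to the limiting directors $d_s$ at that specific shifted point with an error that survives multiplication by $k$; your $o(1)$ is only an $L^2$ remainder, and after multiplying by $k$ it is uncontrolled. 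Equivalently, your correctors $\tilde\alpha^{(k)}$ at $\bar x_1-\tfrac{1}{2k}$ and at $\bar x_1+\tfrac{1}{2k}$ are two a~priori unrelated quantities whose difference need not be small, so one cannot infer a common weak limit $\alpha$. Your column-by-column computation of $\mathfrak{C}$ is correct once the premise is granted, but the premise is the hard part.

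The paper does not Taylor-expand the limit. It sets $\alpha_{\rm left}^{(k)}(x)=k\bigl[k\T{R^{(k)}(x_1)}\ybk(\bar x_1-\tfrac{1}{2k},x')-\T{(0,x')}\bigr]$ and $\alpha_{\rm right}^{(k)}$ analogously, so that the left and right $3\times 4$ blocks of $T^{(k)}$ equal $\d2d\alpha_{\rm left}^{(k)}$ and $\d2d\alpha_{\rm right}^{(k)}$ exactly. After subtracting the cross-sectional lattice average (a discrete Poincar\'e step on $\mathcal{L}^{\rm ext}$, which you omit but which is precisely what yields $L^2$-boundedness and hence the existence of $\alpha$), one passes to weak limits $\alpha_{\rm left},\alpha_{\rm right}$. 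The correction $\mathfrak{C}$ then emerges from
\[
\d2d\bigl(\alpha_{\rm right}^{(k)}(\cdot-\tfrac{1}{k}e_1)-\alpha_{\rm left}^{(k)}\bigr)
=\T{\bigl[k\bigl(R^{(k)}(\cdot-\tfrac{1}{k})-R^{(k)}\bigr)\bigr]}\,\dk2d\ybk(\bar x_1-\tfrac{1}{2k},\cdot),
\]
a product of $k\bigl(R^{(k)}(\cdot-\tfrac{1}{k})-R^{(k)}\bigr)\weakto-\pl_{x_1}R$ (from the pointwise curvature bound \eqref{eq:ptCurv}) and $\dk2d\ybk(\bar x_1-\tfrac{1}{2k},\cdot)\to RZ$ strongly in $L^2$ (from \eqref{eq:FrMU} and \eqref{eq:RkRU}). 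With $\alpha:=\tfrac12(\alpha_{\rm left}+\alpha_{\rm right})$ the claimed formula follows. The engine behind $\mathfrak{C}$ is thus the discrete difference quotient of the \emph{approximating rotations} $R^{(k)}$, not a Taylor expansion of the limit directors.
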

\begin{proof}
Formula \eqref{eq:splitGrad} enables us to find the longitudinal and transversal contributions separately.

\noindent 1. \textit{Longitudinal contributions.} We consider the piecewise constant function 
$$ \bar{G}_{\rm long}^{(k)}
  :=\frac{k}{2}\Bigl[\T{(R^{(k)})}\Delta_1 \ybk(-1,-1,-1,-1,1,1,1,1)-e_1\T{e_1}\bar{\Id}\Bigr] $$ 
and observe that for each $x \in \Omega^{\rm ext}_k$ with $\Qb(x):=\Qb(\bar{\xb})=\bar{\xb}+(-\frac{1}{2k},\frac{1}{2k})\times (-\frac{1}{2},\frac{1}{2})^2$ property \eqref{eq:avgDiff1} of the piecewise affine interpolation $\ybk$ yields 
\begin{align*}
\bar{G}_{\rm long}^{(k)}
&=\frac{k}{2}\Bigl(\T{(R^{(k)})}\dashint_{\Qb}\frac{\pl\ybk}{\pl \xb_1}\md\xi-e_1\Bigr)(-1,-1,-1,-1,1,1,1,1)\\
&=\frac{1}{2}\dashint_{\Qb}\Gk e_1\md\xi(-1,-1,-1,-1,1,1,1,1).
\end{align*}
This converges weakly to
$$\frac{1}{2}\dashint_{\Qb'(\xb')}G(\xb_1,\xi')e_1\md\xi'(-1,-1,-1,-1,1,1,1,1),$$
where $\Qb'(\xb')=(\bar{\xb}_2-\frac{1}{2},\bar{\xb}_2+\frac{1}{2})\times(\bar{\xb}_3-\frac{1}{2},\bar{\xb}_3+\frac{1}{2})$, since for any $\varphi\in C^{\infty}_c(\Os)$ 
\begin{align*} 
\int_{\Os}\dashint_{\Q(\bar{x})}\Gkn(\xi)\md\xi \varphi(x) \md x 
&= \int_{\Os}\Gkn(\xi) \dashint_{\Q(\bar{\xi})} \varphi(x) \md x  \md \xi \\ 
&\goto \int_{\Os} G(\xi) \int_{\Qb'(\xi')} \varphi(\xi_1,x') \md x' \md \xi\\ 
&= \int_{\Os} \int_{\Qb'(\xb')} G(x_1,\xi') \md \xi' \varphi(x) \md x.
\end{align*}
(A similar property is also used in \cite[Proposition~4.6]{BrS19}.)
In view of \cite[equation~(3.10)]{MMh4}, the first column of $G$ reads
$$Ge_1=G_1(\xb_1)+\T{R}(\xb_1)\frac{\pl R}{\pl \xb_1}(\xb_1)\T{(0, \xb')}$$
for some $G_1 \in L^2((0,L);\R^3)$ and hence
$$\dashint_{\Qb'(\xb')}G(\xb_1,\xi')e_1\md\xi'=G_1(\xb_1)+\T{R}(\xb_1)\frac{\pl R}{\pl \xb_1}(\xb_1)\T{(0, \bar{\xb})}.$$
It follows that in $L^2(\Os;\R^{3\times 8})$,
\begin{equation*}
\bGk_{\rm long}\weakto\frac{1}{2}\bigl(G_1(\xb_1)+\T{R}(\xb_1)\frac{\pl R}{\pl \x_1}(\xb_1)\T{(0, \bar{\xb})}\bigr)(-1,-1,-1,-1,1,1,1,1). 
\end{equation*}

\noindent 2. \textit{Transversal contributions.} Here the left $3\times 4$ submatrix of some $\bar{A}\in\R^{3\times 8}$ is referred to as the \textit{left part} of $\bar{A}$, whereas the other $3\times 4$ submatrix as the \textit{right part} of $\bar{A}$.

First let us look at the left part
\begin{equation*}
\bGk_{\rm left}(\xb):=k\Bigl[\T{(R^{(k)})}(\xb_1)\dk2d\ybk(\tc{\bar{\xb}_1-\tfrac{1}{2k},\bar{\xb}'})-\begin{pmatrix}[0.7]
\sst 0&
\sst 0&
\sst 0&
\sst 0\\
\sst (\zf^1)'&
\sst (\zf^2)'&
\sst (\zf^3)'&
\sst (\zf^4)'
\end{pmatrix}\Bigr].
\end{equation*}
We define the auxiliary function
\begin{equation*}
\a_{\rm left}^{(k)}(\xb):=k\big[k\T{R^{(k)}(\xb_1)}\ybk(\bar{\xb}_1-\tfrac{1}{2k},\xb')-\T{(0,\xb')}\big],\quad \xb\in \overline{\Omega^{\rm ext}_k},
\end{equation*}
whose average over the cross-sectional lattice is
$$\a_{\mathrm{left},0}^{(k)}(\xb_1):=\frac{1}{\sharp\mathcal{L}^{\rm ext}}\sum_{\xb'\in\mathcal{L}^{\rm ext}}\a_{\rm left}^{(k)}(\xb_1,\xb')$$
and its two-dimensional discrete gradient is equal to $\bGk_{\rm left}$, since
$$\d2d\a_{\rm left}^{(k)}(\xb)=k\Bigl[\T{(R^{(k)}(\xb_1)}\dk2d\ybk(\bar{\xb}_1-\tfrac{1}{2k},\bar{\xb}')-\frac{1}{2}\begin{pmatrix}[0.7]
\sst 0&
\sst 0&
\sst 0&
\sst 0\\
\sst -1&
\sst -1&
\sst 1&
\sst 1\\
\sst -1&
\sst 1&
\sst 1&
\sst -1
\end{pmatrix}\Bigr].$$

Since $S^{\rm ext}$ is a polygonal domain, \tc{setting $\nabla'f=(\pl_{\xb_2}f\,|\,\pl_{\xb_3} f)$ and bounding the $\max_{S^{\rm ext}}$ with a successive maximization over $\mathcal{L}'^{,\rm ext}$ and over the interpolation tetrahedra,} we have
\begin{align*}
  |\a_{\rm left}^{(k)}(\xb)-\a^{(k)}_{\mathrm{left},0}(\xb_1)|^2 
  &\le C \max_{\zeta'\in S^{\rm ext}} |\nabla'\a_{\rm left}^{(k)}(\xb_1,\zeta')|^2  \\
  &=C \max_{\zeta'\in S^{\rm ext}} k^2\biggl|\T{R^{(k)}(\xb_1)}\bigl[\tc{k\nabla'}\ybk(\bar{\xb}_1-\tfrac{1}{2k},\zeta')\bigr]-
\begin{pmatrix}[0.7]
\sst 0&
\sst 0\\
\sst 1&
\sst 0\\
\sst 0&
\sst 1
\end{pmatrix}\biggr|^2 \\ 
  &\le 24\max_{\zeta'\in\mathcal{L}'^{,\rm ext}} C \dashint_{\tilde{Q}(\bar{x}_1,\zeta')} k^2\bigl|\T{R^{(k)}(\xi_1)}\nabla_k\ybk(\xi)-
\Id\bigr|^2\md \xi \\
  &\le C \dashint_{\bar{x}_1-\frac{1}{2k}}^{\bar{x}_1+\frac{1}{2k}} \int_{S^{\rm ext}} \bigl|G^{(k)}(\xi)\bigr|^2\md \xi. 
\end{align*}
Integrating over $\Omega^{\rm ext}_k$ shows that $\a_{\rm left}^{(k)}-\a^{(k)}_{\mathrm{left},0}$ and $\partial_{\xb_s}(\a_{\rm left}^{(k)}-\a^{(k)}_{\mathrm{left},0})=\partial_{\xb_s}\a_{\rm left}^{(k)}$, $s=2,3$, are bounded in $L^2(\Omega^{\rm ext}_k;\R^3)$. 
We thus find $\a_{\rm left}\in L^2(\Os;\R^3)$ with $\nabla'\a_{\rm left}\in L^2(\Os;\R^{3\times 2})$ such that, passing to a subsequence,
$$\a_{\rm left}^{(k)}-\a_{\mathrm{left},0}^{(k)}\weakto \a_{\rm left} 
\quad\text{and}\quad 
\partial_{\xb_s} \big( \a_{\rm left}^{(k)}-\a_{\mathrm{left},0}^{(k)} \big) \weakto \partial_{\xb_s} \a_{\rm left},~ s=2,3,$$ 
in $L^2(\Os;\R^3)$. 
In particular, for any $\bar{x}' \in \mathcal{L}'^{,\rm ext}$ and $i,j \in \{1,2,3,4\}$ with \tc{$|\zf^i-\zf^j|=1$} considering the triangle $T=\mathrm{conv}\{\bar{x}',\bar{x}'+(\zf^i)',\bar{x}'+(\zf^j)'\} \subset S^{\rm ext}$ we still have 
$$ \nabla' \big( \a_{\rm left}^{(k)}-\a_{\mathrm{left},0}^{(k)} \big) \weakto \nabla'\a_{\rm left} 
\quad\text{in } 
L^2((0,L) \times T;\R^{3\times 2}). $$
Our piecewise affine interpolation scheme and the definition of $\a_{\rm left}^{(k)}$ guarantee that $\nabla'  \a_{\rm left}^{(k)}(x)$ is independent of $x'$ and piecewise constant in $x_1$ for $x\in (0,L) \times T$. Therefore \tc{ $\nabla'\a_{\rm left}(x)$ does not depend on $x' \in T$ either} and we may conclude that 
$$ \big[\d2d \big( \a_{\rm left}^{(k)}-\a_{\mathrm{left},0}^{(k)} \big)\big]_{\bullet i}
  = \nabla' \big( \a_{\rm left}^{(k)}-\a_{\mathrm{left},0}^{(k)} \big) (\zf^i)'
  \weakto \nabla'\a_{\rm left}  (\zf^i)' 
  = \big[\d2d \a_{\rm left} \big]_{\bullet i}$$
in $L^2((0,L) \times T;\R^3)$. As 
both $\d2d ( \a_{\rm left}^{(k)}-\a_{\mathrm{left},0}^{(k)} )$ and $\d2d \a_{\rm left}$ are in fact independent of $x' \in \Qb'=\bar{x}'+(-\frac{1}{2},\frac{1}{2})^2$, we even have $[\d2d ( \a_{\rm left}^{(k)}-\a_{\mathrm{left},0}^{(k)} )]_{\bullet i} \weakto [\d2d \a_{\rm left} \big]_{\bullet i}$ in $L^2((0,L) \times \Qb';\R^3)$ and so, since $\bar{x}'$ and $i$ were arbitrary, 
$$\d2d(\a_{\rm left}^{(k)}-\a_{\mathrm{left},0}^{(k)})=\bar{G}_{\rm left}^{(k)}\weakto \d2d\a_{\rm left}\text{ in }L^2(\Os;\R^{3\times 4})$$
and the restriction of $\a_{\rm left}$ to $(0,L)\times\mathcal{L}^{\rm ext}$ is well defined. Similarly we find $\a_{\rm right}\in L^2(\Os;\R^3)$, the weak limit of $\a_{\rm right}^{(k)}-\a_{\mathrm{right},0}^{(k)}$, so that
\begin{equation*}
\bGk_{\rm right}=k\Bigl[\T{(R^{(k)})}\dk2d\ybk(\,\tc{\bar{\cdot}+\tfrac{1}{2k}e_1})-\begin{pmatrix}[0.7]
\sst 0&
\sst 0&
\sst 0&
\sst 0\\
\sst (\zf^5)'&
\sst (\zf^6)'&
\sst (\zf^7)'&
\sst (\zf^8)'
\end{pmatrix}\Bigr]\weakto \d2d\a_{\rm right}.
\end{equation*}

It would be nice to express $\a_{\rm right}$ in terms of $\a_{\rm left}$ and $R$. We see that
$$\a_{\rm right}^{(k)}(\xb-\tfrac{1}{k}e_1)=k\bigl[k\T{(R^{(k)}(\xb_1-\tfrac{1}{k})}\ybk(\tc{\bar{\xb}_1-\tfrac{1}{2k}},x')-\T{(0,\xb')}\bigr]$$
and so
$$\a_{\rm right}^{(k)}(\xb-\tfrac{1}{k}e_1)-\a_{\rm left}^{(k)}(\xb)=k^2\T{\bigl[(R^{(k)})(\xb_1-\tfrac{1}{k})-(R^{(k)})(\xb_1)]}\ybk(\bar{\xb}_1-\tfrac{1}{2k},x').$$
For the discrete gradient of the above expression, we have
\begin{align}\label{eq:dgalar}
\begin{split}
\MoveEqLeft\d2d\bigl(\a_{\rm right}^{(k)}(\xb-\tfrac{1}{k}e_1)-\a_{\rm left}^{(k)}(\xb)\bigr)\\
&=\frac{\T{(R^{(k)})}(\xb_1-\frac{1}{k})-\T{(R^{(k)})}(\xb_1)}{1/k}\dk2d\ybk(\bar{\xb}_1-\tfrac{1}{2k},x').
\end{split}
\end{align}
From \eqref{eq:ptCurv} and \eqref{eq:dist-SO}, we see that $\bigl(k(R^{(k)}(\cdot-\frac{1}{k})-R^{(k)})\bigr)_{k\in\N}$ is bounded in $L^2$ and therefore has a subsequence weakly converging to, say, $F$. 
Moreover, convergence \eqref{eq:RkRU} gives $F=-\frac{\pl R}{\pl \xb_1}$, thus
\begin{equation}\label{eq:wConvdRdx}
k\bigl(R^{(k)}(\cdot-\tfrac{1}{k})-R^{(k)}\bigr)\weakto-\frac{\pl R}{\pl \xb_1}\text{ in }L^2(\Os;\R^{3\times 3}).
\end{equation}
Now we notice that
\begin{equation}\label{eq:d2dykConv}
\dk2d\ybk(\bar{\xb}_1-\tfrac{1}{2k},x')\goto R(x_1)\begin{pmatrix}[0.7]
\sst 0&
\sst 0&
\sst 0&
\sst 0\\
\sst (\zf^1)'&
\sst (\zf^2)'&
\sst (\zf^3)'&
\sst (\zf^4)'
\end{pmatrix}\text{ in }L^2(\Os;\R^{3\times 4}).
\end{equation}
 Indeed, for $\tc{\bar{x}'} \in \mathcal{L}'^{,\rm ext}$ and $i,j \in \{1,2,3,4\}$ with \tc{$|\zf^i-\zf^j|=1$}  we let $T_\ell=(\frac{\ell}{k}+\frac{1}{2k},\bar{x}')+\mathrm{conv}\{0, -\frac{1}{2k}e_1, \zfb^i, \zfb^j\}$, $\ell = -1,0,\ldots,\tc{kL_k}$, so that $\ybk$ is affine on every $T_\ell$. Also set $T = \bigcup_{\ell} T_\ell$ and let $\chi_k$ be the characteristic function of $T$.  Since $\dk2d\ybk(\bar{x}_1-\tfrac{1}{2k},\cdot)$ is constant on each $\Qb'(\bar{x}') = \bar{x}'+(-\frac{1}{2},\frac{1}{2})^2$ \tc{ and $R^{(k)}$ is independent of $\xb'$}, we have 
\begin{align*}
\MoveEqLeft
\int_{(-\frac{1}{k},L_k+\frac{1}{k})\times \Qb'(\bar{x}')}\big|\big[\dk2d\ybk(\bar{x}_1-\tfrac{1}{2k},\xi')\big]_{\bullet i} - R^{(k)}(x_1)\T{(0,(\zf^i)')}\big|^2\md\xb_1\md\xi' \\ 
&=24 \int_{\Omega^{\rm ext}_k}\chi_k\big|\big(\nabla_k\ybk(x) - R^{(k)}(x_1)\big)\T{(0,(\zf^i)')}\big|^2\md x \to 0 
\end{align*}
as $k\to\infty$ by \eqref{eq:FrMU}. Since $\bar{x}'$ and $i$ were arbitrary, \eqref{eq:d2dykConv} now follows from \eqref{eq:RkRU}. 

Thus in \eqref{eq:dgalar}, we combine \eqref{eq:wConvdRdx} with \eqref{eq:d2dykConv} to obtain the limit
\begin{equation}\label{eq:alarLim}
\d2d\a_{\rm right}-\d2d\a_{\rm left}=\T{R}\frac{\pl R}{\pl \xb_1}\begin{pmatrix}[0.7]
\sst 0&
\sst 0&
\sst 0&
\sst 0\\
\sst (\zf^1)'&
\sst (\zf^2)'&
\sst (\zf^3)'&
\sst (\zf^4)'
\end{pmatrix},
\end{equation}
as $-(\pl_{\xb_1}\T{R})R=\T{R}\pl_{\xb_1}R$.

\noindent 3. Finally we bring all contributions together:
\begin{align*}
\bGk
&=\bGk_{\rm long}+\big(\bGk_{\rm left}\,|\,\bGk_{\rm right}\big)\\
&\rightharpoonup \frac{1}{2}\Bigl(G_1(\xb_1)+\T{R}(\xb_1)\frac{\pl R}{\pl \xb_1}(\xb_1)\T{(0,\bar{x}')}\Bigr)(-1,-1,-1,-1,1,1,1,1)\\
&\quad+\big(\d2d\a_{\rm left}\,|\,\d2d\a_{\rm right}\big).
\end{align*}
To finish the proof, we set $\a:=(\a_{\rm left}+\a_{\rm right})/2$ (restricted to $(0,L)\times\mathcal{L}^{\rm ext}$), and use \eqref{eq:alarLim} as well as
\begin{equation*}
\T{R}\frac{\pl R}{\pl \xb_1}=\begin{pmatrix}[0.7]
\sst 0&
\sst -\kappa_2&
\sst -\kappa_3\\
\sst \kappa_2&
\sst 0&
\sst -\tau\\
\sst \kappa_3&
\sst \tau&
\sst 0
\end{pmatrix}.\qedhere
\end{equation*}
\end{proof}
With the help of Proposition~\ref{Gbar}, the proof of Theorem \ref{GammaU}(i) can now be completed following \cite{FrM02} (see also \cite{MMh4,BS06}). We include the details for convenience of the reader. For  $\vec{y} \in \R^{3\times 8}$ we set $W_{\rm tot}(x',\vec{y}) = W_{\rm cell}(\vec{y})$ if $x' \in S$ and $W_{\rm tot}(x',\vec{y}) = W_{\rm surf}(\mathfrak{t}(x'),\vec{y})$ if $x'\in S^{\rm ext} \setminus S$ so that $Q_{\rm tot}(\xb',\cdot)$ is the quadratic form generated by $\nabla^2 W_{\rm tot}(\xb',\bar{\Id})$. 
Using \eqref{eq:EkU}, the non-negativity of $W_{\rm end}$ and the frame-indifference of $W_{\rm tot}$, we can write
\begin{align}\label{eq:liminfWcellU}
\begin{split}
E^{(k)}(y^{(k)})
&\ge\sum_{\hat{x}\in \hat{\Lambda}_k' \cup \hat{\Lambda}_k'^{,\mathrm{surf}}} W_{\rm tot}\big(\hat{x}',\vec{y}^{\,(k)}(\hat{x})\big)\\
&=k \int_{(0,L_k)\times S^{\rm ext}}  W_{\rm tot}\big(x', \T{R^{(k)}(x_1)} \bar{\nabla}_k \ybk(x) \big) \md x\\
& =k \int_{(0,L_k)\times S^{\rm ext}}  W_{\rm tot}\big(x', \bar{\Id} + \tfrac{1}{k} \bGk(\xb) \big) \md x
\end{split}
\end{align}
We let $\chi_k$ be the characteristic function of $\{|\bGk|\leq\sqrt{k}\} \cap \tc{[(0,L_k)\times S^{\rm ext}]}$ and note that $\chi_k\goto 1$ boundedly in measure on $\Os$. As both $W_{\rm tot}(x', \cdot)$ and $\nabla W_{\rm tot}(x', \cdot)$ vanish at $\bar{\Id}$, a Taylor expansion yields 
\begin{equation*}
\chi_kW_{\rm tot}\big(x', \bar{\Id}+\tfrac{1}{k}\bGk\big)
\geq \frac{1}{2k^2}\chi_k Q_{\rm tot}(x', \bGk)-\chi_k\o\big(\tfrac{1}{k}|\bGk|\big),
\end{equation*}
where $\o(t)=o(t^2)$, $t\goto 0$. We deduce that
\begin{align}\label{eq:chiIntsU}
k E^{(k)}(y^{(k)})
&\ge \frac{1}{2} \int_{\Os}  \chi_k Q_{\rm tot}(x', \bGk) \md x 
- k \int_{\Os} \chi_k|\bGk|^2\frac{\o(\tfrac{1}{k}|\bGk|)}{(\frac{1}{k}|\bGk|)^2} \md x 
\end{align}
We can move $\chi_k$ inside the second argument of $Q_{\rm tot}$. As $Q_{\rm tot}(\xb',\cdot)$ is positive semidefinite, the convergence $\chi_k\bGk\weakto\bar{G}$ thus yields
\begin{equation*}
\liminf_{k\goto\infty} kE^{(k)}(y^{(k)})\geq\frac{1}{2}\int_{\Os}Q_{\rm tot}(\xb',\bar{G})\md\xb
\end{equation*}
if the second term in \eqref{eq:chiIntsU} goes to zero. But that follows from the boundedness of $\bGk$ in $L^2(\Os;\R^{3\times 8})$ and the cut-off by $\chi_k$ forcing $L^\infty$-convergence of the fraction involving $\o$.

We substitute in $Q_{\rm tot}$ the representation of $\bar{G}$. By Proposition \ref{Gbar},
\begin{gather*}
\int_{S^{\rm ext}} Q_{\rm tot}(\xb',\bar{G})\md\xb'=\int\limits_{S^{\rm ext}} Q_{\rm tot}\Bigl(\xb',\frac{1}{2}\Bigl[G_1+\T{R}\frac{\pl R}{\pl\xb_1}\T{(0, \bar{\xb}_2, \bar{\xb}_3)}\Bigr](-1,-1,-1,-1,1,1,1,1)\\
+\underbrace{\frac{1}{4}\T{R}\frac{\pl R}{\pl\xb_1}\begin{pmatrix}[0.7]
\sst 0&
\sst 0&
\sst 0&
\sst 0&
\sst 0&
\sst 0&
\sst 0&
\sst 0\\
\sst 1&
\sst 1&
\sst -1&
\sst -1&
\sst -1&
\sst -1&
\sst 1&
\sst 1\\
\sst 1&
\sst -1&
\sst -1&
\sst 1&
\sst -1&
\sst 1&
\sst 1&
\sst -1
\end{pmatrix}}_{\mathfrak{C}}+(\d2d\a|\d2d\a)\Bigr)\md\xb'.
\end{gather*}

The definition of $Q_{\rm cell}^{\rm rel}$ lets us eliminate $G_1$, which only depends on $\xb_1$, and conclude that
\begin{equation*}
\liminf_{k\goto\infty} kE^{(k)}(y^{(k)})
\geq\frac{1}{2}\int_{\O^\square}Q_{\rm tot}(\xb',\bar{G})\md\xb
\geq \frac{1}{2}\int_0^L Q_{\rm cell}^{\rm rel}\left(\T{R}\frac{\pl R}{\pl\xb_1}\right)\md\xb_1.
\end{equation*}
\begin{rem}
Although in continuum theories with homogeneous materials, it can be proved that (an analogue of) the minimizing $\xb_1$-stretch $g$ in \eqref{eq:minProbU} is 0 \cite{Scardia}, here in ultrathin rods it does not seem so clear how to investigate this question.
\end{rem}

\subsection{Proof of the upper bound}\label{sec:upperU}

\begin{proof}[Proof of Theorem \ref{GammaU}(ii)]
Thanks to the $\Gamma$-liminf inequality, it is enough to show 
$$\limsup_{k\goto\infty} kE^{(k)}(y^{(k)})\leq E_{\rm ult}(\y,d_2,d_3).$$
This trivially holds if $(\yb,d_2,d_3)\not\in\calA$. By contrast, if $(\yb,d_2,d_3)\in\calA$, we first additionally suppose that $\yb\in\calC^3([0,L];\R^3)$, $d_2,d_3\in \calC^2([0,L];\R^3)$. Define the sequence of lattice deformations
\begin{equation*}
\ybk(\xb):=\yb(\xb_1)+\frac{1}{k}\xb_2 d_2(\xb_1)+\frac{1}{k}\xb_3 d_3(\xb_1)+\frac{1}{k}q(\xb_1)+\frac{1}{k^2}\b(\xb),\quad\xb\in\{0,\tfrac{1}{k},\ldots,L_k\}\times\mathcal{L}^{\rm ext},
\end{equation*}
where $\b(\cdot,x')\in\calC^1([0,L];\R^3)$ for each $x'\in\mathcal{L}^{\rm ext}$ and $q\in\calC^2([0,L];\R^3)$ are arbitrary for the time being. We interpolate and extend the sequence $\ybk$ to a piecewise affine mapping on $\Omega^{\rm ext}_k$ as in Section~\ref{sec:inter} (by now applying Lemma~\ref{lemma:ext-general} to $S^{\rm ext}_k$ instead of $S_k$ and \tc{then restricting $\ybk$ to $\Omega^{\rm ext}_k$, as no new external atomic layers are needed}) so that 
\begin{align}\label{eq:ends-ok}
\operatorname{ess\, sup}_{\Omega^{\rm ext}_k} \mathrm{dist}^2(\nabla_k \ybk,\mathrm{SO}(3)) 
&\le\tc{C}\operatorname{ess\, sup}_{(0,L_k)\times \frac{1}{kh_k} S^{\rm ext}_k} \mathrm{dist}^2(\nabla_k \ybk,\mathrm{SO}(3)). 
\end{align}
The rescaled discrete gradient of $\ybk$  is
\begin{align*}
[\bar{\nabla}_k\ybk(\xb)]_{\bullet i}
&=k\Bigl[\yb(\bar{\xb}_1+\tfrac{1}{k}\zf_1^i)-\frac{1}{2}\bigl(\yb(\bar{\xb}_1-\tfrac{1}{2k})+\yb(\bar{\xb}_1+\tfrac{1}{2k})\bigr)\Bigr]\\
&\quad +\sum_{s=2}^3\Bigl[(\bar{\xb}_s+\zf_s^i)d_s(\bar{\xb}_1+\tfrac{1}{k}\zf_1^i)-\frac{1}{2}\bar{\xb}_s\bigl(d_s(\bar{\xb}_1-\tfrac{1}{2k})+d_s(\bar{\xb}_1+\tfrac{1}{2k})\bigr)\Bigr]\\
&\quad +q(\bar{\xb}_1+\tfrac{1}{k}\zf_1^i)-\frac{1}{2}\bigl(q(\bar{\xb}_1-\tfrac{1}{2k})+q(\bar{\xb}_1+\tfrac{1}{2k})\bigr) +\tfrac{1}{k}\bigl(\tilde{\b}(\bar{\xb}+\zfb^i)-\tilde{\b}(\bar{\xb})\bigr),
\end{align*}
where $\tilde{\b}$ denotes the usual piecewise affine interpolation of $\b$. Let $R=(\frac{\pl\yb}{\pl x_1}\, |\, d_2\, |\, d_3)$. As in \eqref{eq:liminfWcellU}, frame-indifference for the energy defined in \eqref{eq:EkU} yields
\begin{align}\label{eq:limsupWcellU}
\begin{split}
E^{(k)}(y^{(k)})
&=k\int_{(0,L_k)\times S^{\rm ext}} W_{\rm tot}\bigl(\xb',\bar{\Id}+\tfrac{1}{k}\bar{F}^{(k)}(x)
\bigr)\md\xb \\
&\quad + \sum_{x\in\{-\frac{1}{2k},L_k+\frac{1}{2k}\}\times\mathcal{L}'^{,\rm ext}} W_{\rm end}\big(\mathfrak{t}_k(kx_1,x'),\bar{\nabla}_k\ybk(\xb)\big),
\end{split}
\end{align}
where 
\begin{equation*}
\bar{F}^{(k)}(\xb)=\frac{\T{R(\bar{\xb}_1)}\bar{\nabla}_k\ybk(\xb)-\bar{\Id}}{1/k}.
\end{equation*}

We would like to find the limits of $\bar{F}^{(k)}$ and $\frac{1}{k}\bar{F}^{(k)}$ so that we can let $k\goto\infty$ in \eqref{eq:limsupWcellU}. Fix $i\in\{1,2,\dots,8\}$. For $\xb'\in S^{\rm ext}$ we denote by $\bar{\xb}'$ an element of $\mathcal{L}'^{,\rm ext}$ that is closest to $\xb$. 
Taylor expanding the functions $d_2,d_3,q\in \calC^2([0,L];\R^3)$ about $\bar{\xb}_1$ we deduce that
\begin{align*}
k\Bigl[(\bar{\xb}_s+\zf_s^i)d_s(\bar{\xb}_1+\tfrac{1}{k}\zf_1^i)-\frac{\bar{\xb}_s}{2}\bigl(d_s(\bar{\xb}_1-\tfrac{1}{2k})+d_s(\bar{\xb}_1+\tfrac{1}{2k})\bigr)-d_s(\bar{\xb}_1)\zf_s^i\Bigr]
&\goto (\bar{\xb}_s+\zf_s^i)\frac{\pl d_s}{\pl \xb_1}(\xb_1)\zf_1^i, \\ 
k\Bigl[q(\bar{\xb}_1+\tfrac{1}{k}\zf_1^i)-\frac{1}{2}\bigl(q(\bar{\xb}_1-\tfrac{1}{2k})+q(\bar{\xb}_1+\tfrac{1}{2k})\bigr)\Bigr]
&\goto \zf_1^i\frac{\pl q}{\pl\xb_1}(\xb_1), 
\end{align*}
$s=2,3$, uniformly in $\xb\in\Os$. Similarly, we get by the $\calC^3$-regularity of $\yb$
\begin{align*}
k^2\Bigl[\yb(\bar{\xb}_1+\tfrac{1}{k}\zf_1^i)-\frac{1}{2}(\yb(\bar{\xb}_1-\tfrac{1}{2k})+\yb(\bar{\xb}_1+\tfrac{1}{2k}))\Bigr]-\tc{k}\frac{\pl \yb}{\pl \xb_1}(\bar{\xb}_1)\zf_1^i
&\goto\Bigl(\frac{1}{2}(\zf_1^i)^2-\frac{1}{8}\Bigr)\frac{\pl^2\yb}{\pl\xb_1^2}(\xb_1)=0 
\end{align*}
uniformly in $\xb\in\Os$. Finally, the function $\b$, being uniformly continuous, satisfies 
\begin{align*}
\tilde{\b}(\bar{\xb}+\zfb^i)-\tilde{\b}(\bar{\xb})
&\goto\bigl[\d2d\b(\xb)\,|\,\d2d\b(\xb)\bigr]_{\bullet i}, 
\end{align*}
uniformly in $\xb\in\Os$. Summing up gives 
\begin{align}\label{eq:limsupConvU}
\begin{split}
\MoveEqLeft 
k\Bigl[\bigl[\bar{\nabla}_k\ybk(\xb)\bigr]_{\bullet i}-\bigl(\tfrac{\pl\yb}{\pl\xb_1}\, \big|\, d_2\, \big|\, d_3\bigr)(\bar{\xb}_1)\zf^i\Bigr] \\
&\goto\sum_{s=2}^3(\bar{\xb}_s+\zf_s^i)\frac{\pl d_s}{\pl\xb_1}\zf_1^i+\frac{\pl q}{\pl\xb_1}\zf_1^i 
+[\d2d\b(\xb)\,|\,\d2d\b(\xb)]_{\bullet i}
\end{split}
\end{align}
for any $i\in\{1,2,\dots,8\}$ and so 
\begin{align*}
\bar{F}^{(k)}(x)
&\goto \T{R}(x_1) \Bigl(\frac{\pl R}{\pl\xb_1}(x_1)\T{(0, \bar{\xb}_2, \bar{\xb}_3)} + \frac{\pl q}{\pl\xb_1}(x_1) \Bigr) \T{e_1}\bar{\Id}  \\ 
&\quad + \T{R}(x_1) \frac{\pl R}{\pl\xb_1}(x_1) \Bigl[\zf_1^i\T{(0, \zf_2^i,\zf_3^i)} \Bigr]_{i=1}^8 
+ \T{R}(x_1)\bigl(\d2d\b(\xb)|\d2d\b(\xb)\bigr)
\end{align*}
uniformly in $\xb\in\Os$.  

We first note that by $W_{\rm end}(\mathfrak{t},\cdot) \le C {\rm dist}^2(\cdot,\bar{\rm SO}(3))$, \eqref{eq:limsupConvU} and \eqref{eq:ends-ok}, 
\begin{align*}
\sum_{\xb\in\{-\frac{1}{2k},L_k+\frac{1}{2k}\}\times\mathcal{L}'^{,\rm ext}}
W_{\rm end}\big(\mathfrak{t}_k(kx_1,x'),\bar{\nabla}_k\ybk(\xb)\big) 
\le \frac{C}{k^2},  
\end{align*}
so that this term can be neglected in what follows.
Now Taylor's approximation in \eqref{eq:limsupWcellU} gives 
\begin{align}\label{eq:limsupPartConclU}\begin{split}
\tc{k}E^{(k)}(y^{(k)})
 &\goto\frac{1}{2}\int_{\Os} Q_{\rm tot}\biggl(\xb', \T{R}(x_1) \Bigl(\frac{\pl R}{\pl\xb_1}(x_1)\T{(0, \bar{\xb}_2, \bar{\xb}_3)} + \frac{\pl q}{\pl\xb_1}(x_1) \Bigr) \T{e_1}\bar{\Id}  \\ 
&\quad + \T{R}(x_1) \frac{\pl R}{\pl\xb_1}(x_1) \bigl[\zf_1^i\T{(0, \zf_2^i,\zf_3^i)} \bigr]_{i=1}^8
+ \T{R}(x_1)\bigl(\d2d\b(\xb)|\d2d\b(\xb)\bigr)\biggr)\md\xb.
\end{split}
\end{align}

Next we turn to the case that $(\yb,d_2,d_3)\in\calA$, but $R=(\pl_{\xb_1}\yb|d_2|d_3)$ only belongs to $H^1((0,L);\R^{3\times 3})$. Approximation will allow us to build upon the already finished part of the proof. As this can be done \tc{in analogy to} \cite{MMh4} we only indicate the main steps.

Let $(\a(\xb_1,\cdot),g(\xb_1))$ be a solution of the minimizing problem in the definition of $Q_{\rm cell}^{\rm rel}$. Recall that $\a\in L^2((0,L)\times\mathcal{L}^{\rm ext};\R^3)$, $g\in L^2((0,L);\R^3)$. Find approximating sequences $(\a^{(j)})\subset\calC^1([0,L]\times\mathcal{L}^{\rm ext};\R^3)$, $(R^{(j)})\subset\calC^2([0,L];{\rm SO}(3))$, $(g^{(j)})\subset\calC^2([0,L];\R^3)$ such that $\a^{(j)}\goto\a$ in $L^2((0,L)\times\mathcal{L}^{\rm ext};\R^3)$, $g^{(j)}\goto g$ in $L^2((0,L);\R^3)$ and $R^{(j)}\goto R$ in $H^1([0,L];\R^{3\times 3})$. Further, write $R^{(j)}=(\pl_{\xb_1}\yb^{(j)}|d_2^{(j)}|d_3^{(j)})$ with $d_2^{(j)},d_3^{(j)}\in\calC^2([0,L];\R^3)$ and $\yb^{(j)}$ belonging to $\calC^3([0,L];\R^3)$ such that $\yb^{(j)}(0)=\yb(0)$; this gives $(\yb^{(j)}|d_2^{(j)}|d_3^{(j)})\in\calA$.

For every $j\in\N$, $\b:=R^{(j)}\a^{(j)}$ and $\pl_{\xb_1}q:=R^{(j)}g^{(j)}$ we can construct, by the first part of the proof, $(\yb^{(k,j)})_{k=1}^\infty$ such that
$\yb^{(k,j)}\goto\yb^{(j)}$ in $L^2(\Os;\R^3)$ and $k\frac{\pl \yb^{(k,j)}}{\pl\xb_s}\goto d_s^{(j)}$ in $L^2(\Os;\R^3)$, $s=2,3$ as $k\goto\infty$, so that \eqref{eq:limsupPartConclU} holds with $y^{(k)}$, $d_2$, $d_3$, \tc{$\b$ and $\pl_{\xb_1}q$} replaced with $y^{(k,j)}$, $d_2^{(j)}$, $d_3^{(j)}$, $R^{(j)}\a^{(j)}$ and $R^{(j)}g^{(j)}$, respectively. Finally, diagonalize (take $\yb^{(k,j_k)}$ for a suitable sequence $(j_k)_{k=1}^\infty$) and the proof is finished, since the integral in \eqref{eq:limsupPartConclU} behaves continuously in $R$, $\b$ and $\pl_{\xb_1}q$ with respect to the required topologies.
\end{proof}

\section{Resulting theory for thin rods}\label{sec:T}

We now consider the situation of `thin rods' when the cross section of the rod is not given by a fixed \tc{2D} lattice $\mathcal{L}$ but rather by a macroscopic set $hS \subset \R^2$ whose diameter $h=h_k$ satisfies $\frac{1}{k} \ll h \ll 1$ so that $\Omega = (0, L) \times hS$ is eventually filled with atoms. Again, $L>0$ stands for the rod's length and the cross section is defined in terms of $S$ and $S_k$ as described in Section~\ref{sec:DiscreteModel}. For convenience we also assume that $|S|=1$ and that the axes are oriented in such a way that
\begin{equation}\label{eq:coord}
\int_S\x_2\x_3\md\x'=\int_S\x_2\md\x'=\int_S\x_3\md\x'=0. 
\end{equation}
Since $S$ has a Lipschitz boundary, we can fix $m \ge 1$ such that $S^{\rm ext}_k \supset kh_k S$ for all $k$. 

\subsection{Gamma-convergence}

As in Section \ref{sec:GammaU} in view of Theorem~\ref{MMcomp} and \eqref{rig} the convergence in Theorem~\ref{GammaT} is stated in terms of the piecewise affine interpolations $\yk$ and their rescaled gradients $\nabla_k \yk$ on $\O=(0,L)\times S$ and it suffices to consider limiting configurations $\yb\in H^1((0,L);\R^3)$ and $\tc{d_2,d_3} \in L^2((0,L);\R^3)$.  We remark that, by Theorem~\ref{MMcomp}, \tc{one could equivalently consider the $\Gamma$-limit} in the $L^2_{\rm loc}(\O)$ topology. Also, the convergence could be alternatively formulated in terms of $L^2$ convergence of piecewise constant interpolations of $\yk |_{\tilde{\Lambda}_k}$ and the piecewise constant $\bar{\nabla}_{k} \yk$ to $\y$ and $R\,\bar{\Id}$, respectively; \tc{see} \cite{BrS19}.

\begin{thm}\label{GammaT}
If $k\goto\infty$ and $h_k\goto 0+$ with $kh_k\goto\infty$, the functionals $\frac{1}{k^3h_k^4}E^{(k)}$ $\Gamma$-converge to the functional $E_{\rm th}$ defined below, in the following sense:
\begin{enumerate}
\item[(i)] Let $(y^{(k)})_{k=1}^\infty$ be a sequence of lattice deformations such that their piecewise affine interpolated extensions $(\yk)_{k=1}^\infty$, defined in Section \ref{sec:not}, converge to $\y\in H^1((0,L);\R^3)$ in $L^2(\O;\R^3)$. Let us also assume that $\frac{1}{h_k}\pl_{\x_s}\yk\goto d_s \in L^2((0,L);\R^3)$ in $L^2(\O;\R^3)$, $s=2,3$. Then
$$E_{\rm th}(\y,d_2,d_3)\leq \liminf_{k\goto\infty} \frac{1}{k^3h_k^4}E^{(k)}(y^{(k)}).$$
\item[(ii)] For every $\y\in H^1((0,L);\R^3)$, $d_2,d_3\in L^2((0,L);\R^3)$ there is a sequence of lattice deformations $(y^{(k)})_{k=1}^\infty$ such that their piecewise affine interpolated extensions $(\yk)_{k=1}^\infty$, defined in Section \ref{sec:not}, satisfy $\yk\goto\y$ in $L^2(\O;\R^3)$, $\frac{1}{h_k}\frac{\pl\yk}{\pl\x_s}\goto d_s$ in $L_{\rm loc}^2(\O;\R^3)$ for $s=2,3$, and
$$\lim_{k\goto\infty} \frac{1}{k^3h_k^4}E^{(k)}(y^{(k)})=E_{\rm th}(\y,d_2,d_3).$$
\end{enumerate}
The limit energy functional is given by
\begin{equation*}
E_{\rm th}(\y,d_2,d_3)=\begin{cases}
\frac{1}{2}\int_0^L Q_{\rm cell}^{\rm rel}(\T{R}\pl_{\x_1}R)\md\x_1 & \text{if } (\y,d_2,d_3)\in\calA,\\
+\infty & \text{otherwise},
\end{cases}
\end{equation*}
where $R:=(\pl_{\x_1}\y|d_2|d_3)$ and the class $\calA$ of admissible deformations is as in Theorem \ref{GammaU}. The relaxed quadratic form $Q_{\rm cell}^{\rm rel}:\R^{3\times 3}_{\rm skew}\goto[0,+\infty)$ is defined as
\begin{equation}\label{eq:minProbT}
Q_{\rm cell}^{\rm rel}(A):=\min_{\a\in H^1(S;\R^3)}\int_S Q_{\rm cell}\Bigl(\bigl(A\T{(0, \x_2, \x_3)}\big|\tfrac{\pl\a}{\pl\x_2}\big|\tfrac{\pl\a}{\pl\x_3}\bigr)\bar{\Id}\Bigr)\md\x'.\end{equation}
\end{thm}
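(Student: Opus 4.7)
The plan is to follow the same blueprint as the proof of Theorem~\ref{GammaU} and the continuum derivation in \cite{MMh4}, exploiting the fact that when $\frac{1}{k} \ll h_k$ the discrete and continuum gradients become interchangeable at the relevant energetic scale. So the proof should amount to combining the compactness result of Theorem~\ref{MMcomp}, a Taylor expansion of $W_{\rm cell}$ at $\bar{\Id}$, and a Cauchy--Born type construction for recovery sequences, while using that surface terms (and end terms) contribute at most $O(kh_k^2) \cdot (kh_k)^{-1} = o(1)$ relative to the bulk after dividing by $k^3h_k^4$ and that the number of surface atomic cells scales like $kh_k \cdot k \ll (kh_k)^2 k$ in the bulk.

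For the liminf inequality, I would first invoke Theorem~\ref{MMcomp} to extract piecewise constant rotations $R^{(k)} \to R$ with $R = (\pl_{x_1}\y | d_2 | d_3) \in {\rm SO}(3)$, and define
\begin{equation*}
  G^{(k)}(x) = \frac{\T{(R^{(k)})}(x_1)\,\nabla_k \yk(x) - \Id}{h_k},
\end{equation*}
which is bounded in $L^2(\Omega^{\rm ext}_k;\R^{3\times 3})$ by \eqref{eq:FrMU} and hence converges weakly (along a subsequence) to some $G \in L^2(\Omega;\R^{3\times 3})$. Arguing exactly as in \cite[equation~(3.10)]{MMh4} (now permitted because $kh_k \to \infty$ makes the discrete cross-sectional lattice asymptotically dense in $S$), the first column has the form $Ge_1 = G_1(x_1) + \T{R}(x_1)\pl_{x_1}R(x_1)\T{(0,x_2,x_3)}$ for some $G_1 \in L^2((0,L);\R^3)$, and the other two columns $Ge_2, Ge_3$ can be written as $\pl_{x_2}\alpha$, $\pl_{x_3}\alpha$ for an $\alpha \in L^2((0,L);H^1(S;\R^3))$ — here we use that $G^{(k)}e_s = \frac{1}{h_k}\T{(R^{(k)})}\pl_{x_s}\yk$ is a derivative of the $L^2$-bounded function $\frac{1}{h_k}\T{(R^{(k)})}\yk$ minus a cross-sectional mean (standard Poincaré argument). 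Next, by Lemma~\ref{equivNorms}, the discrete gradient $\bar{\nabla}_k \yk$ differs from $\nabla_k \yk \cdot \bar{\Id}$ only by an $O(1/k)$ perturbation after rescaling, which is negligible compared to the $h_k$ scale since $kh_k \to \infty$. Therefore $\frac{1}{h_k}(\T{(R^{(k)})}\bar{\nabla}_k \yk - \bar{\Id}) \rightharpoonup G\,\bar{\Id}$ in $L^2$ (a careful version of \eqref{eq:avgDiff1}); from here a Taylor expansion of $W_{\rm cell}$ at $\bar{\Id}$ combined with the cut-off trick from \eqref{eq:chiIntsU} yields
\begin{equation*}
  \liminf_{k\to\infty}\frac{1}{k^3 h_k^4}E^{(k)}(y^{(k)}) \ge \frac{1}{2}\int_\Omega Q_{\rm cell}\big(G\,\bar{\Id}\big)\,\md x,
\end{equation*}
and minimising out $G_1(x_1)$ and the warping $\alpha(x_1,\cdot)$ at each fixed $x_1$ produces precisely $\frac{1}{2}\int_0^L Q_{\rm cell}^{\rm rel}(\T{R}\pl_{x_1}R)\,\md x_1$ as defined in \eqref{eq:minProbT}.

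For the recovery sequence, I would first treat the case $\y \in \calC^3$, $d_2,d_3 \in \calC^2$ and a smooth minimiser $\alpha \in \calC^2([0,L];\calC^1(\overline{S};\R^3))$ of \eqref{eq:minProbT}, and set
\begin{equation*}
  \yk(x) = \y(x_1) + h_k x_2 d_2(x_1) + h_k x_3 d_3(x_1) + h_k^2 R(x_1)\alpha(x_1,x') + h_k q(x_1),
\end{equation*}
for an auxiliary $q \in \calC^2$ capturing the optimal $G_1$. After restricting to the lattice, interpolating and extending via Lemma~\ref{lemma:ext-general}, a direct Taylor expansion in powers of $h_k$ and $1/k$ (using again that $1/(kh_k) \to 0$ so that the discrete gradient agrees with the continuum gradient up to a vanishing error) gives
$$
  \T{R^{(k)}}\bar{\nabla}_k \yk - \bar{\Id} = h_k\Big(\T{R}\pl_{x_1}R\,\T{(0,x_2,x_3)} + \T{R}q' + (\pl_{x_2}\alpha|\pl_{x_3}\alpha)\bar{\Id}\Big) + o(h_k)
$$
uniformly, which combined with $W_{\rm cell}(\bar{\Id} + h_k M) = \frac{h_k^2}{2}Q_{\rm cell}(M) + o(h_k^2)$ and the scaling $\frac{1}{k^3 h_k^4}\cdot h_k^2 \cdot (k h_k^2 \cdot \#\{\text{cells}\}) \sim \frac{1}{k^3 h_k^4} \cdot h_k^2 \cdot k^3 h_k^2 = 1$ produces the expected limit. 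The general case $(\y,d_2,d_3) \in \calA$ and $\alpha \in L^2((0,L);H^1(S;\R^3))$ is handled by the diagonal approximation argument of the upper bound proof of Theorem~\ref{GammaU} (and of \cite{MMh4}).

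The main obstacle I expect is rigorously justifying the interchange of the discrete and continuum gradient in the thin regime when identifying the structure of the weak limit $\bar{G}$ — in particular, controlling the boundary region $k h_k S_k \setminus k h_k S$ where the rescaled cross section $\frac{1}{kh_k}S_k$ fails to coincide exactly with $S$, ensuring that the energy contribution there is negligible; this is where the Lipschitz regularity of $\pl S$ and the extension of Lemma~\ref{lemma:ext-general} are used. A secondary technical point is that Theorem~\ref{MMcomp} only produces rotations that are piecewise constant on scale $h'_k$, not on scale $1/k$, so care is needed when combining \eqref{eq:RkRU} with the finer lattice data — but this is essentially the same bookkeeping already performed in \cite{MMh4}.
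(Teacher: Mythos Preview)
Your overall strategy matches the paper's: compactness via Theorem~\ref{MMcomp}, identification of the weak limit $G$ of the rescaled strain, Taylor expansion with a cutoff for the liminf, and a Cauchy--Born recovery sequence followed by diagonal approximation for the limsup. The treatment of surface and end contributions, and the final minimisation over $G_1$ and $\alpha$, are also in line with the paper. (Your extra $q$-term in the recovery sequence is harmless but unnecessary in the thin regime: the paper simply takes $\yk = \y + h_k x_2 d_2 + h_k x_3 d_3 + h_k^2 \beta$, since the coordinate normalisation \eqref{eq:coord} makes the $G_1$-part decouple and drop out in the liminf.)

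The genuine gap is your justification of $\bar G^{(k)} \rightharpoonup G\,\bar{\Id}$. You write that ``by Lemma~\ref{equivNorms}, the discrete gradient $\bar{\nabla}_k \yk$ differs from $\nabla_k \yk \cdot \bar{\Id}$ only by an $O(1/k)$ perturbation after rescaling.'' Lemma~\ref{equivNorms} says nothing of the sort: it is a norm equivalence, not a pointwise comparison, and $\nabla_k \yk$ is not even constant on a cell, so the expression $\nabla_k \yk\,\bar{\Id}$ has no obvious cell-wise meaning to compare with $\bar{\nabla}_k \yk$. That the ultrathin correction $\mathfrak{C}$ from Proposition~\ref{Gbar} disappears when $kh_k\to\infty$ is the right intuition, but turning it into a proof requires more. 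The paper does this in Proposition~\ref{reprBarG} via a shift argument (inspired by \cite{BS06}): one writes $\bar G^{(k)}_{\bullet i} - [\bar G^{(k)}(\cdot+a)]_{\bullet j}$ for $a=\tilde z^i-\tilde z^j$ as a line integral of $G^{(k)}$ plus a remainder involving $\frac{1}{h_k}\bigl(R^{(k)}(\cdot+a_1)-R^{(k)}\bigr)$, and then shows this rotation increment tends to $0$ in $L^2$ using that $R^{(k)}$ is constant on intervals of length $h_k'$ with jumps controlled by \eqref{eq:ptCurv}. Summing over $j$ yields $\bar G_{\bullet i} = G z^i$. This step is genuinely discrete-to-continuum and is not contained in \cite{MMh4}; your closing remark that it is ``essentially the same bookkeeping already performed in \cite{MMh4}'' underestimates it. The obstacles you do flag (the boundary region $\frac{1}{kh_k}S_k$ versus $S$, and the $h_k'$-scale of $R^{(k)}$) are real but secondary; the second of them is precisely what feeds into the shift argument above.
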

\begin{rem}
Theorem \ref{GammaT} is in direct correspondence with the $\Gamma$-limit in \cite{MMh4}. In fact, the work \cite{Conti} shows that for $W_{\rm cell}$ admissible as \textit{full} in Definition \ref{admCellF} and boundary conditions close to a rigid motion, defining the 3D continuum stored energy density $\mathcal{W}$ as $\mathcal{W}(F)=W_{\rm cell}(F\bar{\Id})$, $F\in\R^{3\times 3}$, is justified (the Cauchy--Born rule is valid). If $\mathcal{W}$ is defined this way, then $A:\pl_{F}^2\mathcal{W}(\Id):A=Q_{\rm cell}(A\bar{\Id}):=Q_3(A)$ and with $Q_2$ derived from $Q_3$ by the auxiliary minimization (3.1) in \cite{MMh4}, we get the bending-torsion functional from \cite{MMh4}, since $Q_{\rm cell}^{\rm rel}(A)=Q_2(A)$.
\end{rem}
\begin{rem}
Like in \cite{MMh4}, it can be proved that a solution to the minimum problem in \eqref{eq:minProbT} exists. Since all skew-symmetric matrices are in the kernel of $F\mapsto\nabla^2 W_{\rm cell}(\bar{\Id}):F\bar{\Id}$, we can replace the components of $\nabla\a$ by the components of $\frac{1}{2}(\nabla\a+\T{\nabla}\a)$ on the right-hand side of \eqref{eq:minProbT} and get $H^1$-bounds by a version of Korn's inequality \cite[Theorem~2.5]{Olei} if we require that $\a$ belong to the class
$$\mathcal{V}=\{\beta\in H^1(S;\R^3);\;\int_S\beta\md\x'=0,\;\int_S\nabla\beta\md\x'=0\}.$$
Since $Q_{\rm cell}$ is convex, we obtain the existence of a minimizer $\a$ by the direct method of the calculus of variations. Strict convexity of $Q_{\rm cell}$ on $\R_{\rm sym}^{3\times 3}\bar{\Id}$ implies that the minimizer is unique in $\mathcal{V}$. Further, by analyzing the Euler-Lagrange equations as in \cite{MMequi} we see that $\a$ depends linearly on the entries of $A$ so that $Q_{\rm cell}^{\rm rel}$ is a quadratic form and if $\a(\x_1,\cdot)$ is the solution of \eqref{eq:minProbT} in $\mathcal{V}$ and $A:=[\T{R}R'](\x_1)$, $\x_1\in[0,L]$, we get $\a\in L^2(\O;\R^3)$ and $\pl_{\x_s}\a\in L^2(\O;\R^3)$, $s=2,3$, thanks to $\T{R}R'\in L^2([0,L];\R^{3\times 3})$.
\end{rem}
\tc{\begin{rem}
As mentioned in Example \ref{exampU}, cell energies given by a sum of pairwise interactions may not satisfy assumption (E2) if they do not include an additional penalty term which prevents them from being minimized on improper rotations. Besides, a deficiency of our approach in terms of physical modelling is that interatomic potentials from molecular dynamics are typically bounded near infinity, so the growth assumption (E4) does not apply. However, even energies that are ${\rm O}(3)$-invariant and do not grow quadratically away from $\bar{\rm SO}(3)$ can be treated in case of `sufficiently thin' rods. Following \cite[Section~2.4]{BrS19}, let us suppose that $W_{\rm cell}$ only fulfils (E1), (E3), but also the following alternative assumptions:
\begin{enumerate}
	\item[(E1.1)] $W_{\rm cell}(\vec{y})=W_{\rm cell}(-\vec{y})$ for all $\vec{y}\in V_0^\bot$,
	\item[(E2.1)] $\min_{\vec{y}\in\R^{3\times 8}}W_{\rm cell}(\vec{y})=0$ and $W_{\rm cell}(\vec{y})=\min W_{\rm cell}$ if and only if $\vec{y}=O\bar{\Id}+\vec{c}$ for some $O\in{\rm O}(3)$ and $\vec{c}\in V_0$,
	\item[(E4.1)] there is a constant $\eta>0$ such that $W_{\rm cell}(\vec{y})\geq \eta$ for every $\vec{y}\in V_0^\bot\setminus\mathcal{U}^\pm$, where $\mathcal{U}$ is the neighbourhood of $\bar{\rm SO}(3)$ from (E3) and $\mathcal{U}^\pm:=\mathcal{U}\cup(-\mathcal{U})$.
\end{enumerate}
Since reflections may lead to unnatural folded configurations with zero energy, we also add a nonlocal term to $E^{(k)}$ to avoid colliding atoms, see \cite{BrS19}. Moreover, if we assume that $k^3h_k^4\goto 0+$ (in particular, this also holds in the ultrathin case), then due to our energy scaling, $W_{\rm cell}(\vec{y}^{(k)})$ must be small on every atomic cell. As a result, $W_{\rm cell}$ is never evaluated at points for which a growth assumption would manifest itself. In this setting, Theorem \ref{GammaT} holds with $R^{(k)},R\in{\rm O}(3)$ (analogously in Theorem \ref{GammaU} and up to replacing $\ybk$ with $-\ybk$ in Theorem \ref{MMcomp}).
\end{rem}}

\subsection{Proof of the lower bound}

\tc{To prove Theorem \ref{GammaT}(i), let us assume that $k^{-3}h_k^{-4}E^{(k)}(y^{(k)}) \le C$, whence \eqref{eq:dist-SO} holds due to \eqref{eq:EkU} and \eqref{rig}.} Without loss of generality passing to a \tc{suitable subsequence,} we obtain the piecewise constant $R^{(k)}$ converging to $R(x_1) = (\frac{\partial \yb}{\partial \x_1}(x)\,|\,d_2(x)\,|\,d_3(x))$ as in  Theorem \ref{MMcomp}. From \eqref{eq:FrMU}, for
\begin{equation*}
\Gkn(\x):=\frac{\T{(R^{(k)})}(\x_1)\nabla_{k}\yk(\x)-\Id}{h_k},\quad \x\in \Omega^{\rm ext}_k,
\end{equation*}
we have $\Gkn\weakto G\in L^2(\O;\R^{3\times 3})$ in $L^2(\O;\R^{3\times 3})$, up to a subsequence. Its discrete version
\begin{equation*}
\bGkn(\x):=\frac{k\T{(R^{(k)})}(\x_1)(\yk(\bar{\x}+\zft^i)-\yk(\bar{\x}))_{i=1}^8-\bar{\Id}}{h_k},\quad \x\in \Omega^{\rm ext}_k.
\end{equation*}
is again bounded in $L^2(\tc{\O};\R^{3\times 8})$ (cf.\ \eqref{eq:norms}). Thus $\bGkn\weakto\bar{G}$ in $L^2(\O;\R^{3\times 8})$ for a (not relabelled) subsequence.

The following proposition is contained in \cite{MMh4}.
\begin{prop}\label{prop:GbarT}
Suppose $\Gkn\weakto G\in L^2(\O;\R^{3\times 3})$ in $L^2(\O;\R^{3\times 3})$. Then there are $G_1 \in L^2((0,L);\R^3)$ and $\alpha \in L^2(\O;\R^3)$ with $\nabla'\alpha \in L^2(\O;\R^{3\times 2})$ such that 
\[ G(x) 
   = \Bigl(G_1(\x_1)+\T{R}\frac{\pl R}{\pl \x_1}\T{(0, \x_2, \x_3)}\,\big|\,\nabla'\alpha(x) \Bigr).
\]
\end{prop}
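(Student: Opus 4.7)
The plan is the thin-rod analogue of the proof of Proposition~\ref{Gbar} (ultimately the argument of \cite{MMh4}). I will split $G$ into its two transversal columns, which I realize as the cross-sectional gradient of an auxiliary potential $\alpha$, and its longitudinal column $Ge_1$, whose structure is dictated by the macroscopic bending/torsion encoded in $\T{R}\pl_{x_1}R$.

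\textbf{Step 1: transversal columns.} First I would introduce
\begin{equation*}
\alpha^{(k)}(x) := \frac{1}{h_k^2}\Bigl[\T{(R^{(k)})}(x_1)\,\yk(x) - h_k\,\T{(0,x_2,x_3)} - c^{(k)}(x_1)\Bigr],
\end{equation*}
picking $c^{(k)}(x_1)\in\R^3$ so that $\int_S \alpha^{(k)}(x_1,\cdot)\,\md x' = 0$ for a.e.\ $x_1$. Because $R^{(k)}$ does not depend on $x'$, a direct calculation gives $\pl_{x_s}\alpha^{(k)} = [\Gkn]_{\bullet s}$ for $s=2,3$, so the $L^2$-bound on $\Gkn$ transfers to $\nabla'\alpha^{(k)}$. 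The cross-sectional Poincar\'e--Wirtinger inequality on the Lipschitz domain $S$ then yields a uniform $L^2$-bound on $\alpha^{(k)}$ itself. After extracting a (not relabelled) subsequence I obtain $\alpha^{(k)}\weakto\alpha$ in $L^2(\O;\R^3)$ and $\nabla'\alpha^{(k)}\weakto\nabla'\alpha$ in $L^2(\O;\R^{3\times 2})$; column-wise identification gives $(Ge_2\,|\,Ge_3) = \nabla'\alpha$.

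\textbf{Step 2: longitudinal column.} It remains to show that the transverse gradient of $Ge_1$ depends on $x_1$ only, with the stated slope. Since $R^{(k)}$ is independent of $x'$, for $s\in\{2,3\}$ the commutativity of mixed partials of $\yk$ gives, in the distributional sense,
\begin{equation*}
\pl_{x_s}(\Gkn e_1) = \tfrac{1}{h_k}\T{(R^{(k)})}\pl^2_{x_1 x_s}\yk = \pl_{x_1}\Bigl(\tfrac{1}{h_k}\T{(R^{(k)})}\pl_{x_s}\yk\Bigr) - \pl_{x_1}\T{(R^{(k)})}\cdot \tfrac{1}{h_k}\pl_{x_s}\yk.
\end{equation*}
The first term on the right tends to $0$ distributionally because $\tfrac{1}{h_k}\T{(R^{(k)})}\pl_{x_s}\yk\goto\T{R}d_s = e_s$ in $L^1$ (bounded $R^{(k)}$ times strong $L^2$-convergence); the second term is designed to converge to $-(\pl_{x_1}\T{R})d_s = (\T{R}\pl_{x_1}R)e_s$ via $\T{R}R=\Id$. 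Since this limit is independent of $x'$, $x'\mapsto Ge_1(x_1,x')$ is affine with the claimed slopes, and setting $G_1(x_1):=\dashint_S Ge_1(x_1,\cdot)\,\md x'$ produces
\begin{equation*}
Ge_1 = G_1(x_1) + \T{R}(x_1)\,\tfrac{\pl R}{\pl x_1}(x_1)\,\T{(0,x_2,x_3)}.
\end{equation*}

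\textbf{Main obstacle.} The hard part will be making Step~2 rigorous: $R^{(k)}$ is only piecewise constant in $x_1$, jumping on the finite set from Theorem~\ref{MMcomp}, so $\pl_{x_1}\T{(R^{(k)})}$ is a sum of Dirac masses that cannot be paired pointwise with the merely $L^2$ function $\tfrac{1}{h_k}\pl_{x_s}\yk$. The remedy I would adopt is to test the distributional identity above against $\varphi\in\calC_c^\infty(\O;\R^3)$, integrate by parts in $x_1$ separately on each interval where $R^{(k)}$ is constant, and control the interface contributions with the pointwise curvature estimate \eqref{eq:ptCurv}, which implies $\sum_i|R^{(k)}(a_{i+1})-R^{(k)}(a_i)|^2 = O(h_k)$; alternatively one can first mollify $R^{(k)}$ in $x_1$ at scale $h_k$ to smooth the manipulation and then remove the mollification. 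Combined with the strong $L^2$-convergences $R^{(k)}\goto R$ and $h_k^{-1}\pl_{x_s}\yk\goto d_s$, either route identifies the distributional limit and closes the argument. Since this reproduces, in the present discrete setting, the corresponding structural result in \cite{MMh4}, I would ultimately appeal to that proof for the remaining routine verifications.
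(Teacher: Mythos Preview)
Your proposal is correct and follows essentially the same approach as the paper, which does not give an independent argument but simply refers to \cite[(3.10) and (3.13)]{MMh4}. You have in effect sketched the Mora--M\"uller proof: the auxiliary potential in Step~1 is exactly their device for the transversal columns, and the product-rule/commutation argument in Step~2, together with the mollification of $R^{(k)}$ at scale $h_k$ that you mention as a remedy, is precisely how \cite{MMh4} handles the longitudinal column. Your identification of the obstacle (the piecewise constant $R^{(k)}$) and the cure (either interface control via \eqref{eq:ptCurv} or passing to a $W^{1,2}$ average) are both on target; the latter is the route taken in \cite{MMh4}.
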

\begin{proof}
See \cite[(3.10) and~(3.13)]{MMh4}.
\end{proof}

Now we explore how the limits $G$ and $\bar{G}$ are connected. Recall the notation $\bar{G}_{\bullet i}$ for the $i$-th column of $\bar{G}$.
\begin{prop}\label{reprBarG}
The representation $\bar{G}_{\bullet i}=G\zf^i$ holds for every $i\in\{1,2,\dots,8\}.$
\end{prop}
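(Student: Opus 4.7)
The plan is to exploit the piecewise-affine structure of $\ybk$ to express $\bGkn_{\bullet i}$ as an integral average of $\Gkn\zf^i$ along the segment from $\bar{x}$ to $\bar{x}+\zft^i$, plus a small error coming from the fact that $R^{(k)}$ is piecewise constant on the coarser scale $h'_k\sim h_k$ rather than on the $1/k$-scale of the lattice cells.

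First I would invoke the fundamental theorem of calculus. Since $\ybk$ is continuous and piecewise affine on the simplicial decomposition of each cell $\Qb(\bar{x})$, and since $k\zft^i=\T{(\zf_1^i,h_k^{-1}\zf_2^i,h_k^{-1}\zf_3^i)}$ matches the scaling built into $\nabla_k$, one obtains the identity
\begin{equation*}
k\bigl[\ybk(\bar{x}+\zft^i)-\ybk(\bar{x})\bigr]=\int_0^1 \nabla_k\ybk(\bar{x}+t\zft^i)\,\zf^i\,\md t.
\end{equation*}
Inserting this into the definition of $\bGkn_{\bullet i}$ and adding and subtracting the term $\T{(R^{(k)})}(\bar{x}_1+t\zft_1^i)\nabla_k\ybk(\bar{x}+t\zft^i)$ inside the integrand leads to the decomposition $\bGkn_{\bullet i}(x)=M_k(x)+E_k(x)$, where
\begin{align*}
M_k(x)&:=\int_0^1 \Gkn(\bar{x}+t\zft^i)\,\zf^i\,\md t,\\
E_k(x)&:=\frac{1}{h_k}\int_0^1\bigl[\T{(R^{(k)})}(x_1)-\T{(R^{(k)})}(\bar{x}_1+t\zft_1^i)\bigr]\nabla_k\ybk(\bar{x}+t\zft^i)\,\zf^i\,\md t.
\end{align*}

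For the main contribution $M_k$, I would test against $\varphi\in C_c^\infty(\O;\R^3)$, apply Fubini, and observe that the evaluation point $\bar{x}+t\zft^i$ differs from $x$ by an amount vanishing as $k\to\infty$. Combining the $L^2$-continuity of translation for the smooth $\varphi$ with the assumed weak convergence $\Gkn\weakto G$ in $L^2(\O;\R^{3\times 3})$ yields
\begin{equation*}
\int_\O \varphi(x)\cdot M_k(x)\,\md x\longrightarrow \int_\O \varphi(x)\cdot G(x)\zf^i\,\md x.
\end{equation*}

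The main obstacle is controlling $E_k$. It is supported only in cells whose $x_1$-interval contains a jump of $R^{(k)}$, and since $R^{(k)}$ jumps on a grid of scale $h'_k\sim h_k$, the $\O$-measure of this bad set is $O(1/(kh_k))$, which vanishes in the thin regime $kh_k\to\infty$. The pointwise jump estimate \eqref{eq:ptCurv} of Theorem~\ref{MMcomp}, combined with the global bound $\int_\O \mathrm{dist}^2(\nabla_k\ybk,\mathrm{SO}(3))\,\md x\le Ch_k^2$ and summed over the jump locations, gives a collective $O(h_k)$-bound for the sum of squared jumps of $R^{(k)}$. Together with $\|\nabla_k\ybk\|_{L^2(\O)}\le C$ and Cauchy--Schwarz (applied in $x$ for each fixed $t$), these ingredients yield $\|E_k\|_{L^1(\O)}\le C/\sqrt{kh_k}\to 0$. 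Since $\bGkn_{\bullet i}$ is bounded in $L^2(\O;\R^3)$, this $L^1$-convergence of $E_k$ is enough to pass to the weak $L^2$ limit when tested against $C_c^\infty$-functions, and by density one concludes $\bGkn_{\bullet i}\weakto G\zf^i$ in $L^2(\O;\R^3)$, so that the desired identity $\bar{G}_{\bullet i}=G\zf^i$ follows from uniqueness of the weak limit.
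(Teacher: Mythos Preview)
Your decomposition $\bGkn_{\bullet i}=M_k+E_k$ is correct and the control of $E_k$ is fine, but the argument that $M_k\weakto G\zf^i$ has a genuine gap that makes the proof circular. On every cell where $R^{(k)}$ does not jump (so $E_k=0$) your main term is \emph{identically} $\bGkn_{\bullet i}$: the segment $t\mapsto\bar x+t\zft^i$ is an edge of the simplicial decomposition, along which $\ybk$ is affine, so $\nabla_k\ybk\,\zf^i$ is constant there and
\[
M_k(x)=\frac{1}{h_k}\bigl[\T{(R^{(k)})}\,k\bigl(\ybk(\bar x+\zft^i)-\ybk(\bar x)\bigr)-\zf^i\bigr]=\bGkn_{\bullet i}(x).
\]
Thus asserting $M_k\weakto G\zf^i$ is precisely the statement to be proved. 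The ``translation'' step you invoke does not help: the map $x\mapsto \bar x(x)+t\zft^i$ is not a translation but a step function collapsing each cell to a single point, so no change of variables is available. Weak $L^2$-convergence of $\Gkn$ controls volume integrals of $\Gkn$, not values sampled along one-dimensional segments inside each cell; in particular the cell average $\dashint_{\Q(\bar x)}\Gkn\zf^i$ differs from $\bGkn_{\bullet i}$ in general.

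The paper avoids this by comparing two columns instead of treating one directly. Writing $\bar x_+=\bar x+(\zft^i-\zft^j)$ one has $\bar x+\zft^i=\bar x_++\zft^j$, so the corner value cancels and what remains is the difference of \emph{cell centres} $\ybk(\bar x_+)-\ybk(\bar x)$. By the interpolation property \eqref{eq:surfVolMean} this equals $\dashint_{\Q(\bar x)}[\ybk(\xi+a)-\ybk(\xi)]\,\md\xi$ with $a=\zft^i-\zft^j$, and now the fundamental theorem of calculus produces a genuine volume average $\dashint_{\Q(\bar x)}\int_0^1\nabla_k\ybk(\xi+ta)\,\md t\,\md\xi$ to which the weak convergence $\Gkn\weakto G$ legitimately applies. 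One obtains $\bar G_{\bullet i}-\bar G_{\bullet j}=G(\zf^i-\zf^j)$, and summing over $j$ together with $\sum_j\bar G_{\bullet j}=\sum_j\zf^j=0$ gives the claim.
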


\begin{proof} 
Our method is, loosely speaking, to shift everything to a neighbouring lattice block by a direction vector $a$ and handle the resulting remainder. The approach is inspired by \cite{BS06}. Recall that for $x\in \overline{\Omega^{\rm ext}_k}$, we denote by $\bar{x}$ an element of $\Lkexcb$ that is closest to $x$. Take $\bar{\x},\bar{\x}_+=\bar{\x}+a\in\L'^{,\rm ext}_k$, where $a:=\zfb^i-\zfb^j$, and compute
\begin{align}\label{eq:x3shift}\begin{split}
k\T{(R^{(k)})}\bigl(\yk(\bar{\x}+\zft^i)-\yk(\bar{\x})\bigr)-\zf^i
&=k\T{(R^{(k)})}\bigl(\yk(\bar{\x}_++\zft^j)-\yk(\bar{\x}_+)\bigr)-\zf^j\\
&\quad +k\T{(R^{(k)})}\bigl(\yk(\bar{\x}_+)-\yk(\bar{\x})\bigr)+(\zf^j-\zf^i)\end{split}
\end{align}
(note that $\bar{\x}+\zft^i=\bar{\x}_++\zft^j$). Let $\Q=\Q(\bar{\x})=\bar{\x}+(-\frac{1}{2k},\frac{1}{2k})\times(-\frac{1}{2kh_k},\frac{1}{2kh_k})^2$. Property \eqref{eq:surfVolMean} of the piecewise affine interpolation gives
\begin{align}\label{eq:x3diff}
\begin{split}
k(\yk(\bar{\x}_+)-\yk(\bar{\x}))
&=k\dashint_{\Q}\yk(\xi_+)-\yk(\xi)\md\xi\\
&=\dashint_{\Q}\int_0^1k\frac{\md}{\md t}(\yk(\xi+ta))\md t\md\xi
=\dashint_{\Q}\int_0^1 k\nabla\yk(\xi+ta)a\md t\md\xi.
\end{split}
\end{align}
Dividing \eqref{eq:x3shift} by the rod thickness $h_k$ and using \eqref{eq:x3diff}, we derive
\begin{equation}\label{eq:x3preLim}
\bGkn_{\bullet i}=[\bGkn_+]_{\bullet j}+\frac{1}{h_k}\Bigl[\T{(R^{(k)})}\dashint_{\Q}\int_0^1\nabla_{k}\yk(\xi+ta)\md t\md\xi(\zf^i-\zf^j)-(\zf^i-\zf^j)\Bigr],
\end{equation}
where we have set
\begin{equation*}
[\bGkn_+(\x)]_{\bullet j}:=\frac{1}{h_k}[k\T{(R^{(k)}(\x_1))}\bigl(\yk(\bar{\x}_++\zft^j)-\yk(\bar{\x}_+)\bigr)-\zf^j].
\end{equation*}
Fix $\Omega' \subset \subset \Omega$. Let us prove that 
\begin{equation}\label{eq:Gplus}
\bar{G}_+^{(k)}\weakto\bar{G}\text{ in }L^1(\O';\R^{3\times 8}).
\end{equation}
In the first place, shifts by $a$ preserve weak convergence so that in $L^2(\O';\R^{3\times 8})$,
\begin{equation}\label{eq:GplusShift}
\frac{1}{h_k}\Big[k\T{\big(R^{(k)}(\cdot+a_1)\big)}\big(\yk(\cdot+a+\zft^i)-\yk(\cdot+a)\big)_{i=1}^8-\bar{\Id}\Big]\weakto\bar{G}.
\end{equation}
In the second place, due to our construction of $R^{(k)}$ as constant on intervals of length $h_k'$, the difference $R^{(k)}(\cdot+a_1)-R^{(k)}$ can only be nonzero on interfaces of those intervals of constancy. Estimate \eqref{eq:ptCurv} then implies that
\begin{align*}
\int_{\O'}|R^{(k)}(\x_1+a_1)-R^{(k)}(\x_1)|^2\md\x
&\le \frac{|S|}{k} \sum_{i=0}^{\lfloor L_k/h_k'\rfloor-2} 
\bigl|R^{(k)}(ih'_k+\tfrac{3}{2}h'_k)-R^{(k)}(ih'_k+\tfrac{1}{2}h'_k)\bigr|^2 \\ 
&\le \frac{C}{kh_k} \int_{\O^{\rm ext}_k} \mathrm{dist}^2 \bigl( \nabla_k \ybk, \mathrm{SO}(3) \bigr) \md x 
\le \frac{Ch_k}{k}, 
\end{align*}
where the last step \tc{followed from \eqref{eq:dist-SO}}. Thus $\frac{1}{h_k}(R^{(k)}(\cdot+a_1)-R^{(k)})$ tends to $0$ in $L^2$. As $(k(\yk(\cdot+a+\zft^\ell)-\yk(\cdot+a))_{\ell=1}^8)$ is $L^2$-bounded for an analogous reason as $(\bGkn)$, the convergence
$$\frac{1}{h_k}\bigl(\T{R^{(k)}(\cdot+a_1)}-\T{(R^{(k)})}\bigr)k\bigl(\yk(\cdot+a+\zft^\ell)-\yk(\cdot+a)\bigr)_{\ell=1}^8\stackrel{L^1}{\goto} 0$$
combined with \eqref{eq:GplusShift} establishes \eqref{eq:Gplus}. 

By definition, $R^{(k)}$ is constant on $\Q$, so the remainder term in \eqref{eq:x3preLim} equals
\begin{align}
\MoveEqLeft 
\int_0^1\dashint_{\Q}\frac{1}{h_k}\big[\T{(R^{(k)}(\xi_1))}\nabla_{k}\yk(\xi+ta)(\zf^i-\zf^j)-(\zf^i-\zf^j)\big]\md\xi\md t\nonumber\\
&=\dashint_{\Q}\int_0^1\Gkn(\xi+ta)(\zf^i-\zf^j)\md t\md\xi\label{eq:Pkt}\\
&+\int_0^1\dashint_{\Q}\Bigl[\frac{1}{h_k}\Bigl(\T{\bigl(R^{(k)}(\xi_1)-R^{(k)}(\xi_1+ta_1)\bigr)}\nabla_{k}\yk(\xi+ta)(\zf^i-\zf^j)\Bigr)\Bigr]\md t.\label{eq:Rxta}
\end{align}
Term \eqref{eq:Pkt} weakly converges to $G(\zf^i-\zf^j)$ since for any $\varphi\in C^{\infty}_c(\O)$ 
\begin{align*} \int_{\Omega}\dashint_{\Q(\bar{x})}\int_0^1\Gkn(\xi+ta)\md t\md\xi \varphi(x) \md x 
&=\int_{\Omega} \int_0^1\Gkn(\xi+ta)\md t \dashint_{\Q(\bar{\xi})} \varphi(x) \md x  \md \xi \\ 
&\tc{ \goto \int_{\Omega} G(\xi) \varphi(\xi) \md \xi.}
\end{align*}
In \eqref{eq:Rxta}, $\nabla_{k}\yk(\cdot+ta)$ is $L^2$-bounded uniformly in $t$ by \eqref{eq:FrMU} and as above we see that $\frac{1}{h_k}(R^{(k)}(\cdot+\tc{ta_1})-R^{(k)})$ converges to $0$ in $L^2(\O';\R^{3\times 3})$ uniformly in $t$, so the whole term vanishes in the limit.

Thus, passing to the limit in \eqref{eq:x3preLim}, we conclude that
\begin{equation*}
\bar{G}_{\bullet i}-\bar{G}_{\bullet j}=G(\zf^i-\zf^j).
\end{equation*}
The assertion now follows by summing over $j$ and using that $\sum_{j=1}^8 \bar{G}_{\bullet j} = \sum_{j=1}^8 \zf^j = 0$. 
\end{proof}

We can now finish the proof of Theorem \ref{GammaT}(i). 

By \eqref{eq:EkU}, the non-negativity of \tc{$W_{\rm surf}$ and $W_{\rm end}$,} and the frame-indifference of $W_{\rm cell}$, we estimate
as in \eqref{eq:liminfWcellU}
\begin{equation*}
E^{(k)}(y^{(k)})\ge\sum_{\hat{x}\in \hat{\Lambda}_k'} W_{\rm cell}\big(\vec{y}^{\,(k)}(\hat{x})\big)
=k^3h_k^2 \int_{\O} \chi_k(x) W_{\rm cell}\big(\bar{\Id} + h_k \bGk(\xb) \big) \md x,
\end{equation*}
where now $\chi_k$ is the characteristic function of $\{|\bGk|\leq\sqrt{1/h_k}\} \cap [(0,L_k)\times \frac{1}{kh_k}S_k]$. The same arguments as in the ultrathin case, cf.\ also \cite{FrM02,MMh4,BS06}, lead to 
\begin{equation*}
\liminf_{k\goto\infty} \frac{1}{k^3h_k^4}E^{(k)}(y^{(k)})
\geq\int_{\O}\frac{1}{2}Q_{\rm cell}(\bar{G})\md\x.
\end{equation*}

By Proposition \ref{reprBarG}, $Q_{\rm cell}(\bar{G})=Q_{\rm cell}(G\bar{\Id})$. The proof can be \tc{completed} as in \cite{MMh4}: Setting $c_s(x_1) = \int_S \frac{\pl\a}{\pl\x_s}\md x'$, $s=2,3$, and invoking Proposition~\ref{prop:GbarT} \tc{and \eqref{eq:coord}} we find 
\begin{align*}
\int_S Q_{\rm cell}(\bar{G})\md\x'
&=\int_S Q_{\rm cell}\bigl((G_1(\x_1)\,\big|\,c_2(x_1)\big|c_3(x_1))\bar{\Id}\bigr)\md\x'\\
&\quad+\int_S Q_{\rm cell}\Bigl(\Bigl(\T{R}\frac{\pl R}{\pl \x_1}\T{(0, \x_2, \x_3)}
\Big|\frac{\pl\bar{\a}}{\pl\x_2}\Big|\frac{\pl\bar{\a}}{\pl\x_3}\Bigr)\bar{\Id}\Bigr)\md\x',
\end{align*}
\tc{where $\bar{\a}(\x):=\a(\x)-\x_2c_2(\x_1)-\x_3c_3(\x_1)$}. Thus the definition of $Q_{\rm cell}^{\rm rel}$ lets us conclude that
\begin{equation*}
\liminf_{k\goto\infty} \frac{1}{k^3h_k^4}E^{(k)}(y^{(k)})\geq\int_\O\frac{1}{2}Q_{\rm cell}(\bar{G})\md\x\geq 0+\frac{1}{2}\int_0^L Q_{\rm cell}^{\rm rel}\left(\T{R}\frac{\pl R}{\pl\x_1}\right)\md\x_1. 
\end{equation*}

\subsection{Proof of the upper bound}\label{sec:upperT}

\begin{proof}[Proof of Theorem \ref{GammaT}(ii)]
In the nontrivial case that $(\y,d_2,d_3)\in\calA$, we first additionally suppose that $\y\in\calC^3([0,L];\R^3)$, $d_2,d_3\in \calC^2([0,L];\R^3)$. For $\b\in\calC^1(\R^3;\R^3)$ to be fixed later, define the sequence 
\begin{equation*}
\yk(\x):=\y(\x_1)+h_k\x_2 d_2(\x_1)+h_k\x_3 d_3(\x_1)+h_k^2\b(\x),\quad\x\in \{0,\tfrac{1}{k},\ldots,L_k\}\times\frac{1}{kh_k}\mathcal{L}^{\rm ext}_k.
\end{equation*}
We extend and interpolate the sequence $\yk$ on $\Omega^{\rm ext}_{k}$ in the same way as in Section \ref{sec:upperU} so that, in particular, \eqref{eq:ends-ok} holds true again. The rescaled discrete gradient $\bar{\nabla}_{k}\yk(\x)=k[\yk(\bar{\x}+\zft^i)-\frac{1}{8}\sum_{j=1}^8 \yk(\bar{\x}+\zft^j)]_{i=1}^8$ of $\yk$ reads
\begin{align*}
[\bar{\nabla}_k\ybk(\xb)]_{\bullet i}
&=k\Bigl[\yb(\bar{\xb}_1+\tfrac{1}{k}\zf_1^i)-\frac{1}{2}\bigl(\yb(\bar{\xb}_1-\tfrac{1}{2k})+\yb(\bar{\xb}_1+\tfrac{1}{2k})\bigr)\Bigr]\\
&\quad +\sum_{s=2}^3kh_k\Bigl[(\bar{\xb}_s+\zfb_s^i)d_s(\bar{\xb}_1+\tfrac{1}{k}\zf_1^i)-\frac{1}{2}\bar{\xb}_s\bigl(d_s(\bar{\xb}_1-\tfrac{1}{2k})+d_s(\bar{\xb}_1+\tfrac{1}{2k})\bigr)\Bigr]\\
&\quad +kh_k^2\bigl(\tilde{\b}(\bar{\xb}+\zfb^i)-\tilde{\b}(\bar{\xb})\bigr),
\end{align*}
where $\tilde{\b}$ denotes the piecewise affine discretization of \tc{$\b|_{\Lkexb}$} described in Section \ref{sec:not}.

As in \eqref{eq:limsupWcellU} we obtain 
\tc{\begin{gather}\label{eq:limsupWcellT}
\begin{split}
E^{(k)}(y^{(k)})
=k^3h_k^2\int_{(0,L_k)\times S_k} W_{\rm cell}\bigl(\xb',\bar{\Id}+h_k\bar{F}^{(k)}(x)
\bigr)\md\xb+\sum_{\hat{x}\in\hat{\Lambda}_k'^{,\mathrm{surf}}} W_{\rm surf}\bigl(\mathfrak{t}_k(\hat{x}'),\bar{\nabla}\hat{y}^{(k)}(\hat{x})\bigr) \\
+ \sum_{x\in\{-\frac{1}{2k},L_k+\frac{1}{2k}\}\times\mathcal{L}'^{,\rm ext}_k} W_{\rm end}\big(\mathfrak{t}_k(kx_1,x'),\bar{\nabla}_k\ybk(\xb)\big),
\end{split}
\end{gather}}
where 
\begin{equation*}
\bar{F}^{(k)}(\xb)
=\frac{\T{R}(\bar{\xb}_1)\bar{\nabla}_k\ybk(\xb)-\bar{\Id}}{h_k}.
\end{equation*}

Fixing $i\in\{1,2,\dots,8\}$, we deduce the following convergences, analogous to their counterparts in ultrathin rods, uniformly in $\x\in[0,L]\times S'$ for each bounded domain $S' \supset\supset S$: 
\begin{align*}
kh_k\bigl(\tilde{\b}(\bar{\x}+\zfb^i)-\tilde{\b}(\bar{\x})\bigr)&\goto\frac{\pl\b}{\pl\x_2}(\x)\zf_2^i+\frac{\pl\b}{\pl\x_3}(\x)\zf_3^i
\end{align*}
and 
\begin{align*}
k\Bigl[(\bar{\x}_s+\zfb_s^i)d_s(\bar{\x}_1+\tfrac{1}{k}\zf_1^i)-\frac{1}{2}\bar{\x}_s\bigl(d_s(\bar{\x}_1-\tfrac{1}{2k})+d_s(\bar{\x}_1+\tfrac{1}{2k})\bigr)\Bigr]
-\frac{1}{h_k}d_s(\bar{\x}_1)\zf_s^i \goto \x_s\frac{\pl d_s}{\pl \x_1}(\x_1)\zf_1^i
\end{align*}
for $s=2,3$, as well as 
\begin{align*}
\frac{k}{h_k}\Bigl[\y(\bar{\x}_1+\tfrac{1}{k}\zf_1^i)-\frac{1}{2}\bigl(\y(\bar{\x}_1-\tfrac{1}{2k})+\y(\bar{\x}_1+\tfrac{1}{2k})\bigr)\Bigr]-\frac{1}{h_k}\frac{\pl \y}{\pl \x_1}(\bar{\x}_1)\zf_1^i\goto 0. 
\end{align*}
Summing them up leads to
\begin{align}\label{eq:limsupConvT}
\frac{1}{h_k}\Bigl[\bigl[\bar{\nabla}_{k}\yk(\x)\bigr]_{\bullet i}-\bigl(\tfrac{\pl\y}{\pl\x_1}\,\big|\,d_2\,\big|\, d_3\bigr)(\bar{\x}_1)\zf^i\Bigr]&\goto\sum_{s=2}^3\x_s\frac{\pl d_s}{\pl\x_1}\zf_1^i+\frac{\pl\b}{\pl\x_s}\zf_s^i
\end{align}
for any $i\in\{1,2,\dots,8\}$. 

Now we first notice that $\max_{\mathfrak{t} \in \mathfrak{T}} \big( W_{\rm surf}(\mathfrak{t},\cdot) + W_{\rm end}(\mathfrak{t},\cdot) \big) \le C {\rm dist}^2(\cdot,\bar{\rm SO}(3))$, \eqref{eq:limsupConvT}, \eqref{eq:ends-ok}, and the estimate $\sharp (\Lkex\setminus\Lambda_k) \le C(k^2h_k+k^2h_k^2) \le Ck^2h_k$ give 
\begin{equation*}\sum_{\hat{x}\in\hat{\Lambda}_k'^{,\mathrm{surf}}} W_{\rm surf}\bigl(\mathfrak{t}_k(\hat{x}'),\bar{\nabla} y^{(k)}(\hat{x})\bigr)
+\sum_{\hat{x}\in\{-\frac{1}{2},kL_k+\frac{1}{2}\}\times\mathcal{L}'^{,\rm ext}_k} W_{\rm end}\bigl(\mathfrak{t}_k(\hat{x}),\bar{\nabla} y^{(k)}(\hat{x})\bigr) 
\leq Ck^2 h_k^3. 
\end{equation*} 
Hence, Taylor's approximation in \eqref{eq:limsupWcellT} yields
\tc{\begin{equation*}
\frac{1}{k^3h_k^4}E^{(k)}(y^{(k)})\goto\frac{1}{2}\int_{\O} Q_{\rm cell}\Bigl(\T{R}\bigl(\x_2\tfrac{\pl d_2}{\pl\x_1}+\x_3\tfrac{\pl d_3}{\pl\x_1}\,\big|\,\tfrac{\pl\b}{\pl\x_2}\,\big|\,\tfrac{\pl\b}{\pl\x_3}\bigr)\bar{\Id}\Bigr)\md\x.
\end{equation*}}
For general $(\y,d_2,d_3)\in\calA$ with $R=(\pl_{\x_1}\y|d_2|d_3)\in H^1((0,L);\R^{3\times 3})$ we may proceed by approximation exactly as in Section~\ref{sec:upperU}, now using that the solution $\a(\x_1,\cdot)$ of the minimizing problem in the definition of $Q_{\rm cell}^{\rm rel}$ is such that $\a\in L^2(\O;\R^3)$ and $\pl_{\x_s}\a\in L^2(\O;\R^3)$, $s=2,3$.
\end{proof}

\section*{Acknowledgements}
The authors acknowledge the support of the Deutsche Forschungsgemeinschaft (DFG, German Research Foundation) within the Priority Programme SPP 2256 `Variational Methods for Predicting Complex Phenomena in Engineering Structures and Materials'.

\bibliographystyle{alpha} 
\renewcommand{\bibname}{References}
\bibliography{bibliography}
\end{document}